\title{On Color Preserving Automorphisms of Cayley Graphs of Odd Square-free Order}
\author{Edward Dobson}
\address{Department of Mathematics and Statistics, Mississippi State University, Mississippi State, MS 39762, USA and \\ University of Primorska, UP IAM, Muzejski trg 2, SI-6000 Koper, Slovenia}
\email{dobson@math.msstate.edu}
\email{ademir.hujdurovic@upr.si}
\author{Ademir Hujdurovi\' c, Klavdija Kutnar}
\address{University of Primorska, UP IAM, Muzejski trg 2, SI-6000 Koper, Slovenia and\\ University of Primorska, UP FAMNIT, Glagolja\v{s}ka 8, SI-6000 Koper, Slovenia}
\email{klavdija.kutnar@upr.si}
\author{Joy Morris}
\address{Department of Mathematics and Computer Science, University of Lethbridge,
Lethbridge,\\ Alberta, T1K 3M4, Canada}
\email{joy.morris@uleth.ca}
\numberwithin{equation}{section}
\newtheorem{thrm}[equation]{Theorem}
\newtheorem{lem}[equation]{Lemma}
\newtheorem{prop}[equation]{Proposition}
\theoremstyle{definition}
\newtheorem{defin}[equation]{Definition}
\newtheorem{hey}[equation]{Remark}
 \newcounter{case}
 \renewcommand{\thecase}{\arabic{case}}
\newcounter{subcase}
 \renewcommand{\thesubcase}{\alph{subcase}}
\newcommand{\boxprod}{\mathbin{\square}}
\def\tl{\triangleleft}
\def\AGL{{\rm AGL}}
\def\Ker{{\rm Ker}}
\def\fix{{\rm fix}}
\def\Aut{{\rm Aut}}
\def\Stab{{\rm Stab}}
\def\la{\langle}
\def\ra{\rangle}
\def\Z{{\mathbb Z}}
\def\soc{{\rm soc}}
\def\PSL{{\rm PSL}}
\def\PGammaL{{\rm P\Gamma L}}
\def\PSL{{\rm PSL}}
\def\PGL{{\rm PGL}}
\def\Cay{{\rm Cay}}
\begin{document}

\pagestyle{plain}

\baselineskip = 1.3\normalbaselineskip

\maketitle

\begin{abstract}
An automorphism $\alpha$ of a Cayley graph $\Cay(G,S)$ of a group $G$ with connection set $S$ is color-preserving if $\alpha(g,gs) = (h,hs)$ or $(h,hs^{-1})$ for every edge $(g,gs)\in E(\Cay(G,S))$.  If every color-preserving automorphism of $\Cay(G,S)$ is also affine, then $\Cay(G,S)$ is a CCA (Cayley color automorphism) graph.  If every Cayley graph $\Cay(G,S)$ is a CCA graph, then $G$ is a CCA group.  Hujdurovi\' c, Kutnar, D.W. Morris, and J. Morris have shown that every non-CCA group $G$ contains a section isomorphic to the nonabelian group $F_{21}$ of order $21$.  We first show that there is a unique non-CCA Cayley graph $\Gamma$ of $F_{21}$.  We then show that if $\Cay(G,S)$ is a non-CCA graph of a group $G$ of odd square-free order, then $G = H\times F_{21}$ for some CCA group $H$, and $\Cay(G,S) = \Cay(G,T)\boxprod\Gamma$.
\end{abstract}

\section{Introduction and preliminaries}

We consider Cayley digraphs $\Cay(G,S)$ of a group $G$ with connection set $S$ whose arcs $(g,gs)$ are colored with the color $s$, for $s\in S$.  It has been known since at least the early 1970s \cite[Theorem 4-8]{White1973} that any color-preserving automorphism of $\Cay(G,S)$ can only be an automorphism of $\Cay(G,S)$ induced by left translation by an element $k\in G$.  That is, the only color-preserving automorphisms of $\Cay(G,S)$ are of the form $x\mapsto kx$ for some $k\in G$.  The corresponding question for Cayley graphs $\Cay(G,S)$ was not considered until recently \cite{HujdurovicKMM2016}.  Note that the essential difference between the graph and digraph problem is that for a graph we insist that the connection set $S$ satisfies $S = S^{-1}$ and that we insist that any pair of arcs between two vertices $u$ and $v$ are colored with the {\it same} color (whereas in the digraph case we insist that the arcs between two vertices $u$ and $v$ are colored with {\it different} colors).  It is easy to see that there can be additional color-preserving automorphisms of Cayley graphs.  For example, if $G$ is an abelian group (written multiplicatively), then the map $x\mapsto x^{-1}$ is a color-preserving automorphism of any Cayley graph $\Cay(G,S)$.  In this case, $x\mapsto x^{-1}$ is also a group automorphism of $G$, so it makes sense to ask if there are color-preserving automorphisms of $\Cay(G,S)$ that are not group automorphisms of $G$ or translations of $G$; that is, if there are color-preserving automorphisms of $\Cay(G,S)$ which are not {\bf affine}.  Hujdurovi\' c, Kutnar, D.W. Morris, and J. Morris showed \cite{HujdurovicKMM2016} that non-affine color-preserving automorphisms of $\Cay(G,S)$ do exist for some groups $G$, and when $G$ is of odd order found fairly restrictive conditions for such a group $G$ to exist.  In this paper we continue this program, and show that groups $G$ of odd square-free order $n$ for which there exists a Cayley graph $\Cay(G,S)$ with a color-preserving automorphism that is not affine have the form $H\times F_{21}$, where $H$ is group of order $n/21$ and $F_{21}$ is the nonabelian group of order $21$.  Additionally, we show that $\Cay(G,S)$ is the Cartesian product $\Cay(H,T)\boxprod\Gamma$, where $T\subset H$ satisfies $T = T^{-1}$ and $\Gamma$ is the unique non-CCA Cayley graph of $F_{21}$.

Throughout this paper, all groups and graphs are finite.

\begin{defin}
Let $G$ be a group and $S\subset G$ such that $1_G\not\in S$. Define a {\bf Cayley digraph of
$G$}, denoted $\Cay(G,S)$, to be the digraph with vertex set $V(\Cay(G,S)) = G$ and arc set $A(\Cay(G,S)) = \{(g,gs):g\in G, s\in S\}$.  We call $S$ the {\bf connection set of $\Cay(G,S)$}.
\end{defin}

If $S = S^{-1}$ then $\Cay(G,S)$ is a graph, and we call the arc set the {\bf edge set}.  For $g\in G$, define $g_L:G\mapsto G$ by $g_L(x) = gx$.  It is straightforward to verify that $G_L = \{g_L:g\in G\}$ is a group isomorphic to $G$.  The group $G_L$ is the {\bf left regular representation of $G$} and it is an easy exercise to show that $G_L\le\Aut(\Cay(G,S))$.

\begin{defin}\label{def:coloring}
Let $S\subset G$ such that $S = S^{-1}$ and to each pair $s,s^{-1}\in S$, assign a unique color $c(s)=c(s')$, so that $c(s')=c(s)$ implies $s' \in \{s,s^{-1}\}$.  Let $S' = \{c(s):s\in S\}$.  Consider a Cayley graph $\Cay(G,S)$ in which each edge $(g,gs)$ is colored with $c(s)\in S'$, and $E_{c(s)}$ be the set of all edges of $\Cay(G,S)$ that are colored with the color $c(s)\in S'$.  An automorphism $\alpha$ of $\Cay(G,S)$ is a {\bf color-preserving} automorphism if $\alpha(E_{c(s)}) = E_{c(s)}$ for each ${c(s)}\in S'$.  The set of all color-preserving automorphisms of $\Cay(G,S)$ is a group denoted ${\mathcal A}^o$.
\end{defin}

Clearly $G_L\le {\mathcal A}^o$, and if $\alpha\in\Aut(G)$, then $\alpha\in {\mathcal A}^o$ if and only if $\alpha(\{s,s^{-1}\}) = \{s,s^{-1}\}$ for every $s\in S$.

The group $G_L\cdot\Aut(G)\le S_G$ (where $S_G$ denotes the symmetric group on $G$), is the normalizer in $S_G$ of $G_L$ by \cite[Corollary 4.2B]{DixonM1996}, and an element of $G_L\cdot\Aut(G)$ is called {\bf affine}.  As mentioned earlier, automorphism groups of Cayley graphs of abelian groups always contain an affine automorphism that is not in $G_L$, namely $x\mapsto x^{-1}$. Additionally, as an element $\alpha\in\Aut(G)$ is contained in $\Aut(\Cay(G,S))$ and is color-preserving if and only if $\alpha(\{s,s^{-1}\}) = \{s,s^{-1}\}$ for every $s\in S$, given a Cayley graph $\Cay(G,S)$ is it straightforward to compute the subgroup of $G_L\cdot\Aut(G)$ that is color-preserving.  Thus, the {real challenge lies in determining whether or not a given Cayley graph has color-preserving automorphisms that are not affine.

\begin{defin}
We say a Cayley graph $\Cay(G,S)$ of a group $G$ is a CCA-graph if every color-preserving automorphism of $\Cay(G,S)$ is affine.  A group $G$ is a CCA-group if and only if every connected Cayley graph of $G$ is a CCA-graph.
\end{defin}

In \cite[Theorem 6.8]{HujdurovicKMM2016}, the following constraints on groups of odd order that are not CCA were obtained.  Before stating this result, we need another definition.

\begin{defin}
Let $G$ be a group. For any subgroups $H$ and $K$ of $G$, such that $K\tl H$, the quotient $H/K$ is said to be a {\bf section} of $G$.
\end{defin}

\begin{thrm}\cite[Theorem 6.8]{HujdurovicKMM2016}\label{bigpreviousresult}
Any non-CCA group of odd order has a section that is isomorphic to either:
\begin{enumerate}
\item a semi-wreathed product $A\wr_α \Z_n$ (see \cite[Example 6.5]{HujdurovicKMM2016} for the appropriate definitions), where $A$ is a nontrivial, elementary abelian group (of odd order) and $n > 1$, or
\item the (unique) nonabelian group of order $21$.
\end{enumerate}
\end{thrm}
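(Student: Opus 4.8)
The plan is to argue by contradiction through a minimal counterexample, combining the local structure of the color classes with the solvability forced by odd order. Suppose $G$ is a non-CCA group of odd order of smallest possible order among those admitting \emph{no} section of either listed type, and fix a connected Cayley graph $\Cay(G,S)$ together with a color-preserving automorphism $\alpha\in{\mathcal A}^o$ that is not affine. Since $|G|$ is odd, $G$ is solvable by the Feit--Thompson theorem, and no $s\in S$ is an involution, so each color class $E_{c(s)}$ is a disjoint union of cycles, namely the cosets of $\langle s\rangle$. Restricted to one such class, any color-preserving automorphism must carry $\langle s\rangle$-cosets to $\langle s\rangle$-cosets and act on each image cycle either as a rotation or as a reflection. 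This locally dihedral behavior on the cyclic color classes is the basic object I would track throughout.

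First I would record how ${\mathcal A}^o$ interacts with normal subgroups of $G$. Because $G_L$ is regular in ${\mathcal A}^o$ and $G$ is solvable, one obtains block systems of ${\mathcal A}^o$ realized as the coset partitions of normal subgroups $N\tl G$; the induced action on the quotient blocks is a color-preserving action on a Cayley graph of $G/N$, and the action within a single block relates to a Cayley graph of $N$. The point of minimality is that every \emph{proper} section of $G$ must be CCA: were some proper section non-CCA, it would be a smaller non-CCA group and hence, by minimality, would itself contain a forbidden section, which would then be a forbidden section of $G$ as well. Thus the non-affine behavior of $\alpha$ must be invisible on every proper quotient $G/N$ and on every induced subgraph, which should reduce $G$ to a very constrained extension: a minimal normal (elementary abelian, odd-order) subgroup $A$ with the complementary quotient controlling the reflections.

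Next I would extract the combinatorial source of non-affineness. The hypothesis that $\alpha$ is not affine says it cannot be written as a left translation composed with a group automorphism; concretely, the reflections that $\alpha$ performs on the cyclic color classes cannot be globally consistent in the uniform way that $x\mapsto x^{-1}$ is for abelian groups. I would show that such selective, locally dihedral reflections that nonetheless assemble into a single graph automorphism force a semidirect-product pattern: an elementary abelian $A$ on which a cyclic group $\Z_n$ with $n>1$ acts so that reflecting some $A$-cosets but not others preserves all colors. Matching this against the minimal configuration yields the semi-wreathed product $A\wr_\alpha\Z_n$ in the generic case, while the single sporadic failure of this abelian-by-cyclic template is precisely the nonabelian group of order $21$, realized through the distinctive action of $\Z_3$ on $\Z_7$.

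The main obstacle is this last step: translating the global requirement that the selective reflections of $\alpha$ fit together into a genuine graph automorphism into a rigid algebraic constraint on $G$, and then proving that, subject to minimality and odd order, the only two ways to satisfy it are the semi-wreath family and $F_{21}$. This is where the detailed casework lives, since one must simultaneously control the interaction of several colors (generators) at once, rule out all intermediate semidirect structures, and confirm that no smaller section already exhibits non-CCA behavior. The delicate point is verifying that the exceptional group of order $21$ is genuinely \emph{not} captured by the semi-wreath construction and so must be listed as a separate possibility.
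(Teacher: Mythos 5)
This statement is not proved in the paper at all: it is quoted verbatim from \cite[Theorem 6.8]{HujdurovicKMM2016} and used as a black box, so there is no internal proof to compare your attempt against. Evaluating your proposal on its own merits, it is not a proof but an outline, and the outline has a genuine gap at exactly the point where the theorem's content lies. Your final paragraph concedes this: the step that converts ``selective, locally dihedral reflections that assemble into a graph automorphism'' into the precise dichotomy --- a semi-wreathed product section $A\wr_\alpha \Z_n$ or a section isomorphic to $F_{21}$ --- is never carried out; it is only described as ``where the detailed casework lives.'' In particular, nothing in the proposal explains the mechanism by which $F_{21}$ arises (in the literature it comes out of the exceptional embedding $F_{21}\le\PSL(2,7)$, reflected in this paper's \cref{uniqueF21nonCCA}), nor why no other nonabelian group of odd order can play the same role. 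Asserting that $F_{21}$ is ``the single sporadic failure of this abelian-by-cyclic template'' is a restatement of the theorem, not an argument for it.

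There is also a structural flaw in the reduction you propose. You argue that, for a minimal counterexample $G$, every proper section is CCA (correct, since a section of a section of $G$ is a section of $G$), and you then infer that ``the non-affine behavior of $\alpha$ must be invisible on every proper quotient,'' forcing $G$ to be a minimal normal elementary abelian subgroup extended by a cyclic group ``controlling the reflections.'' This inference does not follow. The quotient of a non-affine color-preserving automorphism along an invariant partition coming from $N\tl G$ (cf.\ \cref{quotient color-preserving} in this paper) can perfectly well be affine, or even trivial, on $\Cay(G/N,S/N)$; conversely, the restriction of $\alpha$-related data to blocks need not witness non-CCA behavior of any subgroup. So minimality of $G$ gives you no direct control over where the non-affineness ``lives,'' and the passage from ``all proper sections are CCA'' to ``$G$ is elementary-abelian-by-cyclic'' is unsupported. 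Closing this gap is precisely what requires the heavy machinery the actual proof in \cite{HujdurovicKMM2016} deploys --- notably the fact that point stabilizers in ${\mathcal A}^o$ are $2$-groups when $|G|$ is odd (cited here as \cite[Lemma 6.3]{HujdurovicKMM2016}), the $2$-closure of ${\mathcal A}^o$, and a careful analysis of invariant partitions and wreath-type decompositions --- none of which appears in your sketch.
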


The groups in (1) do not have square-free order, so for the odd square-free integers $n$ under consideration in this paper, the preceding result says that they are CCA-groups unless they contain a section isomorphic to the nonabelian group $F_{21}$ of order $21$.  We now turn to improving this result, and begin with some preliminary results and the definitions needed for them.

\begin{defin}
Let $G$ be a transitive permutation group with invariant partition ${\mathcal B}$.  By $G/{\mathcal B}$, we mean the subgroup of
$S_{\mathcal B}$ induced by the action of $G$ on ${\mathcal B}$, and by $\fix_G({\mathcal B})$ the kernel of this action.
Thus $G/{\mathcal B} = \{g/{\mathcal B}:g\in G\}$ where $g/{\mathcal B}(B_1) = B_2$ if and only if $g(B_1) = B_2$, $B_1,B_2\in{\mathcal B}$, and $\fix_G({\mathcal B}) = \{g\in G:g(B) = B{\rm\ for\ all\ }B\in{\mathcal B}\}$.  It is often the case that $G$ will have invariant partitions ${\mathcal B}$ and ${\mathcal C}$, and every block of ${\mathcal C}$ is a union of blocks of ${\mathcal B}$.  In this case, we write ${\mathcal B}\preceq{\mathcal C}$.  Then $G/{\mathcal B}$ admits an invariant partition with blocks consisting of those blocks of ${\mathcal B}$ contained in a block of ${\mathcal C}$.  For $C\in{\mathcal C}$, we let $C/{\mathcal B}$ be the blocks of ${\mathcal B}$ that are contained in $C$, and write ${\mathcal C}/{\mathcal B}$ for $\{C/{\mathcal B}:C\in{\mathcal C}\}$.
\end{defin}

\begin{prop}\label{calAoimprimitive}
Let $G$ be a group and $1_G\neq s \in S\subset G$.  Then the left cosets of $\la s\ra$ in $G$ form an invariant partition of the color-preserving group of automorphisms ${\mathcal A}^o$ of $\Cay(G,S)$.  Consequently, if there is some $s \in S$ such that $\langle s \rangle\neq G, \{1_G\}$, then ${\mathcal A}^o$ is imprimitive.
\end{prop}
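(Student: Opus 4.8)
The plan is to realize the left cosets of $\langle s\rangle$ as the connected components of a monochromatic spanning subgraph of $\Cay(G,S)$, and then invoke the elementary fact that every graph automorphism permutes connected components. First I would fix the color $c(s)$, which is legitimate since $1_G\neq s\in S$, and consider the spanning subgraph $\Gamma_s = (G, E_{c(s)})$ whose edge set is exactly the set of edges of $\Cay(G,S)$ colored $c(s)$. By \Cref{def:coloring}, the only elements of $S$ receiving the color $c(s)$ are $s$ and $s^{-1}$, so the edges of $\Gamma_s$ are precisely the pairs $\{g, gs\}$ for $g\in G$.

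The key step is identifying the connected components of $\Gamma_s$. Starting from a vertex $g$ and repeatedly traversing edges of $\Gamma_s$, one reaches exactly the vertices $gs^k$ for $k\in\Z$, that is, the elements of the left coset $g\langle s\rangle$; conversely, consecutive powers $gs^k$ and $gs^{k+1}$ are always joined by an edge of $\Gamma_s$. Hence the connected components of $\Gamma_s$ are precisely the left cosets of $\langle s\rangle$ in $G$, and these partition $G$. Now, since any $\alpha\in\mathcal{A}^o$ satisfies $\alpha(E_{c(s)}) = E_{c(s)}$ by definition, $\alpha$ is a graph automorphism of $\Gamma_s$ and therefore permutes the connected components of $\Gamma_s$. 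Thus $\alpha$ permutes the left cosets of $\langle s\rangle$, so these cosets form an $\mathcal{A}^o$-invariant partition, which proves the first assertion.

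For the final clause, I would use that $G_L\le\mathcal{A}^o$ is transitive, so $\mathcal{A}^o$ is transitive on $G$. If $\{1_G\}\neq\langle s\rangle\neq G$, then each coset contains more than one element while there is more than one coset, so the invariant partition just produced is nontrivial, and a transitive group admitting a nontrivial invariant partition is imprimitive. I do not expect a genuine obstacle in this argument; the only point requiring care is the verification in the second step that the components are precisely the \emph{left} cosets of $\langle s\rangle$, which hinges on the left-multiplicative convention $(g, gs)$ used for the arcs of $\Cay(G,S)$, together with the fact that sharing the color $c(s)$ forces membership in $\{s, s^{-1}\}$.
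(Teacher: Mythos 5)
Your proof is correct and follows essentially the same route as the paper's: the monochromatic subgraph $\Gamma_s$ you build is exactly the Cayley graph $\Cay(G,\{s^{\pm 1}\})$ the paper uses, whose connected components are the left cosets of $\la s\ra$, and both arguments conclude by noting that any element of ${\mathcal A}^o$ preserves this subgraph and hence permutes its components. Your treatment of the imprimitivity clause (invoking transitivity of ${\mathcal A}^o$ via $G_L$ and nontriviality of the partition) matches the paper's as well, just spelled out in slightly more detail.
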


\begin{proof}
The Cayley graph $\Cay(G, \{s^{\pm 1}\})$ has as its connected components the left cosets of $\la s\ra$ in $G$.  If $a\in{\mathcal A}^o$, then $a(\Cay(G,\{s^{\pm 1}\})) = \Cay(G,\{s^{\pm 1}\})$ and $a$ certainly maps each connected component of $\Cay(G,\{s^{\pm 1}\})$ to some connected component of $\Cay(G,\{s^{\pm 1}\})$. So the left cosets of $\la s\ra$ form an invariant partition ${\mathcal B}$ of ${\mathcal A}^o$.  Finally, the ${\mathcal A}^o$-invariant partition ${\mathcal B}$ is nontrivial, making $\mathcal A^o$ imprimitive, if and only if $s\neq 1_G$ does not generate $G$.
\end{proof}

\begin{defin}
Let $G\le S_X$ be a transitive permutation group, and ${\mathcal O}_0,\ldots,{\mathcal O}_r$ the orbits of $G$ acting on $X\times X$.  Assume that ${\mathcal O}_0$ is the diagonal orbit $\{(x,x):x\in X\}$.  Define digraphs $\Gamma_1,\ldots,\Gamma_r$ by $V(\Gamma) = X$ and $A(\Gamma) = {\mathcal O}_i$.  The graphs $\Gamma_1,\ldots,\Gamma_r$ are the orbital digraphs of $G$.  Define the {\bf $2$-closure of $G$}, denoted by $G^{(2)}$, as $\cap_{i=1}^r\Aut(\Gamma_i)$.  We say $G$ is {\bf $2$-closed} if $G^{(2)} = G$.
\end{defin}

Observe that if the arcs of each $\Gamma_i$ are colored with color $i$, $1\le i\le r$, then $G^{(2)}$ is the automorphism group of the resulting color digraph. Therefore, we can equivalently define that $G$ is $2$-closed if and only if $G$ is the automorphism group of a colour (di)graph. This is the definition we will use in our next proof.

\begin{lem}\label{color preserving 2-closed}
For a Cayley graph $\Cay(G,S)$, the group ${\mathcal A}^o$ is $2$-closed.
\end{lem}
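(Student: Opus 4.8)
The plan is to use the characterization recorded just before the lemma: a permutation group is $2$-closed if and only if it is the full automorphism group of some colored digraph, i.e.\ of some coloring of the arc set $X\times X$. Thus it suffices to produce a coloring of $G\times G$ whose color-preserving permutations are exactly the elements of ${\mathcal A}^o$. The natural candidate comes from $\Cay(G,S)$ together with its edge-coloring $c$, since ${\mathcal A}^o$ is, essentially by definition, the automorphism group of the edge-colored graph $\Cay(G,S)$; the only real task is to repackage this symmetric edge-coloring of unordered pairs as an arc-coloring of ordered pairs, as the definition of $2$-closure requires.

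Concretely, I would define a coloring $\kappa$ of $G\times G$ using two new colors $0$ and $\infty$ distinct from all the $c(s)$: set $\kappa(g,g)=0$ for every $g\in G$ (a color reserved for the diagonal); set $\kappa(g,h)=c(s)$ whenever $h=gs$ for some $s\in S$; and set $\kappa(g,h)=\infty$ for every remaining ordered pair, i.e.\ when $g\neq h$ and $g^{-1}h\notin S$. The first routine check is that $\kappa$ is well defined: if $h=gs=gt$ with $s,t\in S$ then $s=t$, so the color $c(s)$ is unambiguous. I would also record that $\kappa$ is symmetric, since if $h=gs$ then $g=hs^{-1}$ with $s^{-1}\in S$, whence $\kappa(h,g)=c(s^{-1})=c(s)=\kappa(g,h)$; this merely reflects that the underlying structure is a graph rather than a genuine digraph, and uses $S=S^{-1}$ together with the convention $c(s)=c(s^{-1})$.

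The heart of the argument is then to verify that the automorphism group of the colored digraph $(G,\kappa)$ equals ${\mathcal A}^o$. If $\alpha$ preserves $\kappa$, then it preserves the classes colored $0$ and $\infty$, hence maps edges to edges and non-edges to non-edges, so $\alpha\in\Aut(\Cay(G,S))$; and since it preserves each edge-color class $E_{c(s)}$, it is color-preserving, giving $\alpha\in{\mathcal A}^o$. Conversely, any $\alpha\in{\mathcal A}^o$ fixes the edge relation and each color class $E_{c(s)}$ by definition, and therefore preserves $\kappa$ on every ordered pair. Hence $\Aut(G,\kappa)={\mathcal A}^o$, and the characterization immediately yields that ${\mathcal A}^o$ is $2$-closed.

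I do not anticipate a genuine obstacle here: the entire content is the translation between the edge-coloring of $\Cay(G,S)$ and an arc-coloring of $G\times G$, combined with the observation that ${\mathcal A}^o$ is manifestly the automorphism group of the resulting colored digraph. The only points deserving care are the well-definedness and symmetry of $\kappa$, both of which are immediate from $S=S^{-1}$ and $c(s)=c(s^{-1})$, so this is the step I would take the most trouble to state cleanly.
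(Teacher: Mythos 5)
Your proposal is correct and follows essentially the same route as the paper: both invoke the characterization of $2$-closure as being the full automorphism group of a colored (di)graph, and both apply it to $\Cay(G,S)$ with its natural edge-coloring from \cref{def:coloring}, whose color-preserving automorphism group is ${\mathcal A}^o$ by definition. The only difference is that you carefully carry out the repackaging of the edge-coloring as an arc-coloring of $G\times G$ (diagonal and non-edge colors included) and verify the group equality, steps the paper dismisses as ``done in the obvious way'' and ``clear.''
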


\begin{proof}
We show that ${\mathcal A}^o$ is the automorphism group of a Cayley color graph.  This is done in the obvious way.  Namely, we edge-color $\Cay(G,S)$ with the natural edge-coloring described in \cref{def:coloring}.  It is then clear that ${\mathcal A}^o$ is the automorphism group of the resulting Cayley color graph.
\end{proof}

\section{CCA graphs of $F_{21}$ and complete CCA graphs}

In this section, we will show that $\Cay(F_{21},\{a^{\pm 1},(ax)^{\pm 1}\})$ is the unique non-CCA graph of $F_{21}$, where $F_{21} = \la a,x | a^3 = x^7 = e, a^{-1}xa = x^2\ra$ is the nonabelian group of order $21$.  This example was first given in \cite[Example 2.3]{HujdurovicKMM2016}, and is drawn in \cref{F21nCCA} (note that $(ax)^{\pm 1} = \{x^4a,x^6a^2\}$).  Edges corresponding to colors $a^{\pm 1}$ are in black, while edges corresponding to $(ax)^{\pm 1}$ are in red.  We will make use of \cite[Theorem 3.2]{Dobson2006a}, and observe that while this result is stated for graphs, the proofs hold for digraphs $\Gamma$ and $2$-closed groups $G$ provided that $\Aut(\Gamma)$ (or $G$) has a nontrivial invariant partition formed by the orbits of a normal subgroup (but the result does not hold for digraphs or $2$-closed groups if this condition is not satisfied).

In order to show that $\Cay(F_{21},\{a^{\pm 1},(ax)^{\pm 1}\})$ is the unique non-CCA graph of $F_{21}$, we require a separate argument to show that the complete Cayley graph on $F_{21}$ is CCA. Since this argument generalizes fairly straightforwardly to any complete Cayley graph on a group that is not a Hamiltonian 2-group, we present the generalization here. We begin with a couple of results we will need repeatedly in the proof.

\begin{lem}\label{inverters}
Let $\Gamma=\Cay(G,G\setminus\{1_G\})$ be a complete graph, viewed as a Cayley graph on a group $G$, and let $\varphi$ be a color-preserving automorphism that fixes $1_G$. If $g, x \in G$ with $\varphi(x)=x^{-1}\neq x$ and $\varphi(g)=g$, then $x^{-1}gx=g^{-1}$. Furthermore, if $\varphi$ does not invert every element of $G$, then $|x|=4$.
\end{lem}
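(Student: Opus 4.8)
The plan is to use repeatedly the fact that in the complete graph $\Gamma$ every pair of distinct vertices $u,v$ is joined by an edge whose color is $c(u^{-1}v)=c(v^{-1}u)$, so that color preservation translates directly into equations in $G$. The foundational observation I would record first is that, since $\varphi$ fixes $1_G$, applying $\varphi$ to the edge $\{1_G,y\}$ (which has color $c(y)$) forces $c(\varphi(y))=c(y)$, and hence $\varphi(y)\in\{y,y^{-1}\}$ for every $y\in G$. Thus $\varphi$ fixes or inverts each element, and an element fails to be inverted by $\varphi$ precisely when it is fixed and is not an involution.

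To prove the first assertion I would apply $\varphi$ to the edge $\{x,g\}$, which is a genuine edge (and so is its image) by injectivity of $\varphi$ and $\varphi(1_G)=1_G$; note $x\neq g$, since otherwise $x^{-1}=\varphi(x)=\varphi(g)=g=x$. Its color is $c(x^{-1}g)$, while its image $\{\varphi(x),\varphi(g)\}=\{x^{-1},g\}$ has color $c(xg)$; color preservation then gives $xg\in\{x^{-1}g,(x^{-1}g)^{-1}\}=\{x^{-1}g,\ g^{-1}x\}$. The possibility $xg=x^{-1}g$ would force $x=x^{-1}$, which is excluded, so $xg=g^{-1}x$; left-multiplying by $x^{-1}$ and then inverting both sides yields $x^{-1}gx=g^{-1}$, as required.

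For the \emph{furthermore}, I would first extract from $x^{-1}gx=g^{-1}$ the consequence $x^{-2}gx^{2}=x^{-1}g^{-1}x=(x^{-1}gx)^{-1}=g$, so that $x^2$ commutes with $g$. By the foundational observation, the hypothesis that $\varphi$ does not invert every element supplies an element $g$ with $\varphi(g)=g$ and $g\neq g^{-1}$; fixing such a $g$, the relation $x^{-1}gx=g^{-1}$ (from the first part) holds and $x^2$ centralizes $g$. I then claim $x^4=1$. Assuming $x^4\neq 1$, so that $x^2\neq x^{-2}$, I would split on $\varphi(x^2)\in\{x^2,x^{-2}\}$. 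If $\varphi(x^2)=x^{-2}$, then $x^2$ is an inverted non-involution, and the first part applied with inverted element $x^2$ and fixed element $g$ gives $x^{-2}gx^{2}=g^{-1}$; but $x^2$ centralizes $g$, forcing $g=g^{-1}$, a contradiction. If $\varphi(x^2)=x^2$, then $x^2$ is a fixed non-involution, and the first part applied with fixed element $x^2$ and inverted element $x$ gives $x^{-1}x^2x=x^{-2}$, i.e.\ $x^2=x^{-2}$, again a contradiction. Hence $x^4=1$, and since $x\neq x^{-1}$ gives $x^2\neq 1$, we conclude $|x|=4$.

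The routine half is the first assertion, a single application of color preservation to one edge. The crux—and the step I expect to be the main obstacle to find rather than to verify—is the idea in the \emph{furthermore} of feeding $x^2$ back into the already-proved relation in \emph{both} roles: as the fixed element (paired with the inverting $x$) and as the inverting element (paired with the fixed $g$), exploiting that an element must centralize anything it conjugation-inverts to its inverse. The hypothesis that $\varphi$ does not invert everything is exactly what produces the fixed non-involution $g$ needed to run the first of these two cases.
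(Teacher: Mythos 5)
Your proof is correct, and while your first assertion is proved essentially the same way as in the paper, your \emph{furthermore} takes a genuinely different route. For the first part, both arguments apply color preservation to the single edge joining $x$ and $g$: the paper sets $h=g^{-1}x$ and phrases the dichotomy as $(gh)^{-1}\in\{gh,gh^{-1}\}$, you phrase it as $xg\in\{x^{-1}g,\,g^{-1}x\}$; these are the same computation, and in both cases the degenerate branch is killed by $x\neq x^{-1}$. For the \emph{furthermore}, the paper stays inside the graph: it introduces the auxiliary vertex $y=gh^2$, notes $y\sim x$ via an edge of color $c(h)$, deduces $\varphi(y)=gh^{-2}$, rules out $\varphi(y)=y^{-1}$ because together with $h^{-1}gh=g^{-1}$ that would force $g=g^{-1}$, and hence gets $\varphi(y)=y$, i.e.\ $h^4=1$; then the identity $x^2=h^2$ (another consequence of $h^{-1}gh=g^{-1}$) converts $|h|=4$ into $|x|=4$. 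You instead bootstrap the already-proved first assertion: from $x^{-1}gx=g^{-1}$ you observe that $x^2$ centralizes $g$, and then feed $x^2$ back into the first assertion in whichever role the dichotomy $\varphi(x^2)\in\{x^2,x^{-2}\}$ dictates --- as the inverted element paired with the fixed $g$ (contradicting centralization), or as the fixed element paired with the inverted $x$ (forcing $x^2=x^{-2}$) --- so that $x^4\neq 1$ is impossible in either case. Your version needs no further edge-chasing once the first assertion is in hand, only the observation $\varphi(y)\in\{y,y^{-1}\}$, which makes it a bit more self-contained and arguably cleaner; the paper's version extracts some incidental extra information (that $\varphi$ fixes $gh^2$ and that $|g^{-1}x|=4$), none of which is used elsewhere. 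Either proof supports the use of the lemma in \cref{complete-CCA}.
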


\begin{proof}
Let $h=g^{-1}x$, so that $x=gh$. Now, $x \sim g$ via an edge of color $c(h)$ (where $x\sim g$ means there is an edge between $x$ and $g$), so $\varphi(x)$ must be adjacent to $\varphi(g)=g$ via an edge of color $c(h)$, meaning $\varphi(x) \in \{gh,gh^{-1}\}$. Thus, $(gh)^{-1} \in \{gh,gh^{-1}\}$. If $|h|=2$ so that $gh^{-1}=gh$ or $(gh)^{-1}=gh$, then $x^{-1}=\varphi(x)=x$, contradicting our choice of $x$. So we must have $|h|>2$ and $(gh)^{-1}=gh^{-1}$, i.e., $gh^{-1}gh=1_G$, so $h^{-1}gh=g^{-1}$. Hence $x^{-1}gx=h^{-1}gh=g^{-1}$.

Observe that for every $a \in G$, either $a=1_G$ or $a \sim 1_G$. Since $\varphi$ is color-preserving, this means $\varphi(a)\in \{a, a^{-1}\}$.
So if $\varphi$ does not invert every element of $G$ then we may assume without loss of generality that it fixes some $g$ with $|g|>2$. Let $y=gh^2$. Since $y\neq g$ and $y \sim x$ via an edge of color $c(h)$, we must have $\varphi(y)=gh^{-2}$. If $\varphi(y)=y^{-1}$, then $gh^{-2}gh^2=1_G$, but this contradicts $h^{-1}gh=g^{-1}$ since $|g|>2$. So we must have $|h|=4$ and $\varphi(y)=y$. Now since $h^{-1}gh=g^{-1}$ and $x=gh$, we see that $x^2=h^2$, so $|x|=4$. Also, $x^{-1}gx=h^{-1}gh=g^{-1}$. Thus, for any $g, x \in G$ with $\varphi(g)=g$ and $\varphi(x)=x^{-1}$, we have $x$ inverts $g$ and $|x|=4$.
\end{proof}

As was mentioned above, Hamiltonian $2$-groups play an important role in this statement. We give here their definition and some key facts.

\begin{defin}
A \textbf{Hamiltonian 2-group} is a nonabelian 2-group, all of whose subgroups are normal. It was proven by Dedekind in the finite case, and extended by Baer to the infinite case (and is now well-known), that Hamiltonian $2$-groups have the form $Q_8 \times \Z_2^{n}$ for some non-negative integer $n$.
\end{defin}

\begin{thrm}\label{complete-CCA}
Let $\Gamma=\Cay(G,G\setminus\{1_G\})$ be a complete graph, viewed as a Cayley graph on a group $G$. Then $\Gamma$ is a CCA graph if and only if $G$ is not a Hamiltonian $2$-group.
\end{thrm}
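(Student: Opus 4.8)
The plan is to prove both directions using \cref{inverters} as the main engine. Throughout, since left translations and group automorphisms are affine and color-preserving, I may pre-compose any color-preserving $\varphi$ with a translation to arrange $\varphi(1_G)=1_G$; the stabilizer of $1_G$ in $G_L\cdot\Aut(G)$ is exactly $\Aut(G)$, so for such a $\varphi$ being affine is equivalent to $\varphi\in\Aut(G)$.

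For the (``only if'') direction I would prove the contrapositive: if $G=Q_8\times\Z_2^n$ is a Hamiltonian $2$-group, then $\Gamma$ is not CCA, the witness being inversion $\iota\colon g\mapsto g^{-1}$. A direct computation shows $\iota$ preserves the colors of $\Gamma$ precisely when every element of $G$ is conjugate only to itself or to its inverse, and this holds in $Q_8\times\Z_2^n$ since the $\Z_2^n$ factor is central and each conjugate of an element of $Q_8$ is that element or its inverse. As $G$ is nonabelian, $\iota\notin\Aut(G)$, and since $\iota$ fixes $1_G$ it is therefore not affine; hence $\Gamma$ is non-CCA.

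For the (``if'') direction I take $\varphi$ color-preserving with $\varphi(1_G)=1_G$ and aim to show $\varphi\in\Aut(G)$ whenever $G$ is not a Hamiltonian $2$-group. Comparing the color of an edge $\{g,h\}$ with that of its image shows color-preservation is equivalent to
\[
\varphi(g)^{-1}\varphi(h)\in\{g^{-1}h,\ h^{-1}g\}\qquad\text{for all }g,h\in G.
\]
Taking $g=1_G$ gives $\varphi(a)\in\{a,a^{-1}\}$, and since $\varphi$ permutes each pair $\{a,a^{-1}\}$ we have $\varphi(a^{-1})=\varphi(a)^{-1}$; substituting $g=a^{-1},\,h=b$ then yields the convenient reformulation $\varphi(a)\varphi(b)\in\{ab,(ab)^{-1}\}$. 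As $\varphi(ab)$ also lies in $\{ab,(ab)^{-1}\}$, the map $\varphi$ is a homomorphism --- hence affine --- unless there is a \emph{defect} pair $(a,b)$ with $|ab|>2$ and $\varphi(ab)=(\varphi(a)\varphi(b))^{-1}\neq\varphi(a)\varphi(b)$. So it suffices to show that a defect forces $G\cong Q_8\times\Z_2^n$, contradicting the hypothesis.

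I would finish by showing a defect forces the structural facts (i) every involution of $G$ is central, (ii) $G$ is a $2$-group, and (iii) every non-central element has order $4$ and all such elements share a common square $z$, a central involution with $G'=\la z\ra$; granting these, squaring is an anisotropic quadratic form on the $\mathbb{F}_2$-space $G/Z(G)$ whose polar form is the nondegenerate commutator form, anisotropic such forms over $\mathbb{F}_2$ occur only in dimension two, so $|G/Z(G)|=4$ and $G=Q_8\cdot Z(G)\cong Q_8\times\Z_2^n$. To establish (i)--(iii) I split on whether $\varphi$ inverts every non-involution. If it does ($\varphi=\iota$), the displayed condition specializes to ``any two elements commute or have equal squares,'' from which a short argument gives (i)--(iii) (a non-commuting pair $u,v$ has $u^2=v^2=:z$ of order $2$ and central, and every non-central element squares to $z$). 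Otherwise some non-involution is fixed and \cref{inverters} applies: every inverted element has order $4$ and inverts every fixed element, and unwinding a defect gives fixed $a,b$ with $x:=ab$ inverted of order $4$, with $a$ and $x$ inverting each other, so that $\la a,x\ra\cong Q_8$. I expect this last regime to be the main obstacle: propagating the local $Q_8$ to the global structure, and especially ruling out odd-order elements to obtain (ii). The key leverage is that inverted elements have order $4$ (so odd-order elements are fixed), while a fixed element $a$ appearing in a defect satisfies $a^{-1}xa=x^{-1}$; iterating this relation forces $a$ to have even order, and in fact to be a $2$-element (an odd-order fixed element inverting $x$ would force $x=x^{-1}$), which is the template I would use to drive $G$ to be a $2$-group.
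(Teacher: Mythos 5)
Your setup and reductions are correct, and your ``defect'' reformulation is a genuinely nice reorganization: since a subgroup of fixed points admits no defect pair, it silently disposes of the case, treated separately in the paper, where the elements fixed by $\varphi$ form a subgroup (index $2$ giving generalised dicyclic groups and an affine $\varphi$; larger index giving a contradiction). Two caveats on the parts you do sketch. First, ``unwinding a defect'' into two \emph{fixed} elements $a,b$ with $ab$ inverted needs a small case analysis you omit: a defect pair can have one inverted entry (then $ab$ is fixed and you must pass to the pair $(a^{-1},ab)$ or $(ab,b^{-1})$), and both-inverted defect pairs must be shown impossible, which itself uses \cref{inverters} and the fixed non-involution $g$. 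Second, in the $\varphi=\iota$ regime the dichotomy is ``commute or $u^2=v^{-2}$,'' not ``equal squares''; these coincide only after you know $|v|=4$.

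The genuine gap is exactly the step you defer: propagating the local $Q_8=\langle a,b\rangle$ to the global facts (i)--(iii), which is the bulk of the paper's proof, and your proposed template cannot close it. The relations your template uses --- inverted elements have order $4$, $x$ inverts every fixed element, a fixed $a$ occurring in a defect inverts $x$ and hence has even order --- are all perfectly consistent with $G$ containing elements of odd order: inversion-by-conjugation is exactly what happens in a dihedral group, so no contradiction can be extracted from these relations alone; and iterating $a^{-1}xa=x^{-1}$ cannot upgrade ``even order'' to ``$2$-element,'' since $a^{-2}xa^{2}=x$ shows the odd part of $a$ commutes with $x$, which again is consistent with everything so far. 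What actually rules out odd order (and forces $y^2=\pm1$ for every $y$) in the paper is a further idea: apply color-preservation to products of an arbitrary element with the $Q_8$ elements $i,j,k$. For inverted $h\notin\langle i,j\rangle$, one shows $hk$ must be fixed (if $hk$ were inverted it would invert $i$, impossible since $h$ and $k$ each invert $i$), whence $h^2=k^2=-1$; for fixed $g$ with $|g|>2$, at least one of $gi,gj$ must be inverted (if both were fixed, $g$ would commute with $i,j$, hence with $k$, yet $k$ inverts $g$), and then $(gi)^2=-1$ together with $g$ inverting $i$ gives $g^2=-1$. Without this $gi,gj,hk$ mechanism or a substitute for it, your claims (ii) and (iii) are unsupported, and the quadratic-form endgame --- which would be a legitimate alternative to the paper's ``every subgroup is normal'' finish, granted (i)--(iii) --- has nothing to stand on.
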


\begin{proof}
First suppose that $G$ is a Hamiltonian $2$-group. Define $\varphi$ by $\varphi(x)=x^{-1}$ for every $x \in G$. Since Hamiltonian $2$-groups are nonabelian, $\varphi$ is not a group automorphism of $G$. To show that $\varphi$ is color-preserving, let $x, y \in G$ with $y=xh$. Then $\varphi(y)=y^{-1}=h^{-1}x^{-1}$. In $G\cong Q_8 \times \Z_2^n$, every element either inverts or commutes with every other element, so $h^{-1}x^{-1}=x^{-1}h^{\pm 1}=\varphi(x)h^{\pm 1}$. Hence the edge from $\varphi(x)$ to $\varphi(y)$ has the same color, $c(h)$, as the edge from $x$ to $y$. This shows that $\Gamma$ is not a CCA graph.

For the converse, let $\varphi$ be an arbitrary color-preserving automorphism of $\Gamma$ that fixes the vertex $1_G$. We will show that either $G$ is a Hamiltonian $2$-group, or $\varphi$ is a group automorphism of $G$.

Suppose initially that for every $g \in G$, $\varphi(g)=g^{-1}$. If $G$ is abelian then $\varphi$ is an automorphism of $G$, so we suppose that there exist $g, h$ such that $gh\neq hg$. The fact that $\varphi$ is color-preserving forces $(gh)^{-1}=\varphi(gh)\in \{\varphi(g)h,\varphi(g)h^{-1}\}=\{g^{-1}h,g^{-1}h^{-1}\}$. Since $g$ and $h$ do not commute, we see that $(gh)^{-1}\neq g^{-1}h^{-1}$ (so $(gh)^{-1}=g^{-1}h$) and that $|h|>2$. Similarly, reversing the roles of $g$ and $h$, we conclude $(hg)^{-1}=h^{-1}g$ and $|g|>2$. Thus, $h$ and $g$ invert each other. Furthermore, combining these yields $gh=g^{-1}h^{-1}$, so $g^2=h^{-2}$. But we also have $(gh)^2=ghgh=g^2=h^2$, so $h^2=h^{-2}$, meaning $|h|=|g|=4$.
Observe that since every pair of non-commuting elements invert each other, every subgroup of $G$ is normal, so that (since $G$ is nonabelian) $G$ is Hamiltonian. We have also shown that every element not in the centre of $G$ has order $4$; if $z$ is in the centre of $G$, then $gz$ does not commute with $h$, so $gz$ has order 4. Furthermore, since every pair of non-commuting elements have equal squares, $(gz)^2=g^2z^2=h^2$ and since $g^2=h^2$, we see that $|z|=2$. Thus, $G$ is a Hamiltonian $2$-group.

We may now assume that there is some $g \in G \setminus 1_G$ such that $\varphi(g)=g$ and $|g|>2$. If $\varphi=1$ then we are done, so there must be some $x \in G$ such that $\varphi(x)=x^{-1}\neq x$.

Suppose that the elements fixed by $\varphi$ form a nontrivial proper subgroup $H$ of $G$; that is, whenever $g_1, g_2\in G$ are fixed by $\varphi$, then so is $g_1g_2$. Since $\varphi(x)=x^{-1}$, by \cref{inverters}, $x$ inverts $g_1, g_2$, and $g_1g_2$, so $x^{-1}g_1g_2x=x^{-1}g_1xx^{-1}g_2x=g_1^{-1}g_2^{-1}=(g_1g_2)^{-1}$. This implies that $g_1$ and $g_2$ commute. Thus, $H$ is abelian. Since $|x|=4$ (by \cref{inverters}), $x^2$ is fixed by $\varphi$, so $x^2 \in H$. If $H$ has index 2 in $G$ then by the definition of $H$, every $xh \in xH$ is inverted by $\varphi$. It is straightforward to show that $\varphi$ is a group automorphism of $G$ in this case (in fact, such a $G$ is a generalised dicyclic group, and this map $\varphi$ is a well-known automorphism of such groups).  If the index of $H$ is greater than 2 then there is some $y\not\in H \cup xH$, so $y$ is inverted by $\varphi$. Furthermore, since $y \not\in xH=x^{-1}H$, we see that $xy \not\in H$, so $xy$ is also inverted by $\varphi$. But then $x, y$, and $xy$ all invert every $g \in H$, which is not possible (if $x$ and $y$ invert $g$ then $xy$ commutes with $g$).

We may now assume that the elements fixed by $\varphi$ do not form a subgroup of $G$, so there exist $a,b \in G$ with $\varphi(a)=a$, $\varphi(b)=b$, and $\varphi(ab)=(ab)^{-1}$. By \cref{inverters}, $ab$ inverts $a$ and $b$, so $a$ and $b$ invert each other, and $|ab|=4$. Hence $a^2=b^2=(ab)^2$. This is enough to characterise $\langle a,b \rangle \cong Q_8$.
Relabel with the standard notation for $Q_8$, so $i=a$ and $j=b$. We will show that for every element $y$ of $G$, $y^2=\pm 1$.

First, let $h$ be an arbitrary element of $G$ that is not in $\langle i,j \rangle$, and that is inverted by $\varphi$. By \cref{inverters}, $|h|=4$ and $h$ inverts $i$ and $j$. Also, since $h$ inverts $i$ and $j$ and $k=ij$, we see that $h$ must commute with $k$. Observe that if $hk$ were inverted by $\varphi$, then $hk$ would invert $i$ (and $j$), but this is impossible since $h$ and $k$ each invert $i$. So $hk$ must be fixed by $\varphi$. Thus $h$ and $k$ both invert $hk$, which implies that $h^2=k^2=-1$, as desired.

Let $g$ be an arbitrary element of $G$ that is not in $\langle i,j\rangle$, and that is fixed by $\varphi$. If $|g|=2$ then $g^2=1$, as claimed. So we assume $|g|>2$. Suppose that $gi$ is fixed by $\varphi$. Then $k$ inverts $gi$, so $k^{-1}gik=i^{-1}g^{-1}=g^{-1}i^{-1}$ since $k$ inverts both $g$ and $i$, so $i$ and $g$ commute. The same is true for $gj$; thus, if $gi$ and $gj$ were both fixed by $\varphi$, then $g$ would commute with $i$ and $j$ and hence with $k$, a contradiction since $|g|>2$. So at least one of $gi$ and $gj$ is inverted by $\varphi$. Suppose that $gi$ is inverted by $\varphi$. By the argument of the previous paragraph, $(gi)^2=-1$. Also, $gi$ inverts $i$, so $g$ inverts $i$. Hence $-1=(gi)^2=g(gi^{-1})i=g^2$, completing the proof of our claim.

We have shown that every non-identity element of $G$ has order $2$ or $4$ (so $G$ is a 2-group), and that the elements of order $4$ all square to $-1$. To complete the proof that $G$ is Hamiltonian, we need to show that every subgroup is normal. Let $r, s \in G$. Then $(rs)^2=\pm 1$, so $rsr=\pm s$, so $r^{-1}sr = rsr$ if $r$ has order $2$ or $r^{-1}sr=-rsr$ otherwise.  In any case, $r^{-1}sr = \pm s$. Similarly, $s^{-1}rs=\pm r$. Thus every subgroup is normal in $G$, so $G$ is Hamiltonian.
\end{proof}

For the reader's convenience, we state the parts we will be using of what is proved in \cite[Theorem 3.2]{Dobson2006a}.

\begin{thrm}[{\cite[Theorem 3.2]{Dobson2006a}}]\label{Teds}
Let $G$ be a $2$-closed group of degree $pq$, $p>q$, admitting a nontrivial invariant partition $\mathcal B$, and suppose that $G$ does not contain a regular cyclic subgroup. Then
\begin{enumerate}
\item if $G$ is the automorphism group of a graph, and every nontrivial invariant partition $\mathcal B$ admitted by $G$ consists of $p$ blocks of size $q$ then $p=2^{2^s}+1$ is a Fermat prime (and $q$ divides $p-2$).
\item if $\mathcal B$ is formed by the orbits of a normal subgroup then $\mathcal B$ consists of $q$ blocks of size $p$, and $G$ contains a transitive metacyclic subgroup. If a maximal transitive metacyclic subgroup $H$ has order $ap$, then
\begin{enumerate}
\item if $\fix_G(\mathcal B)\vert_B$ is doubly transitive for $B \in \mathcal B$, then $q=2$ and
\begin{enumerate}
\item $G=\Z_2 \ltimes \PSL(2,11)$, or
\item $G=\Z_2 \ltimes \PGammaL(n,k)$, where $n$ is prime, $k=r^m$, $r$ is prime, $\gcd(n,k-1)=1$, and $m$ is a power of $n$; while
\end{enumerate}
\item if $\fix_G(\mathcal B)\vert_B$ is not doubly transitive, then
\begin{enumerate}
\item if $a\neq q$ then $G$ is metacyclic of order $ap$, and
\item if $a=q$ then $H\triangleleft G$.
\end{enumerate}
\end{enumerate}
\end{enumerate}
\end{thrm}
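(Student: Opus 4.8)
The plan is to dissect $G$ via its action on the blocks of $\mathcal B$ and the action of the kernel $\fix_G(\mathcal B)$ on a single block, using throughout Burnside's classical theorem that a transitive group of prime degree is either $2$-transitive or permutation isomorphic to a subgroup of $\AGL(1,p)$ (hence a Frobenius group with a regular normal Sylow $p$-subgroup and cyclic point stabilizer). Since $|X|=pq$ with $p>q$ prime, every nontrivial block has size $p$ or $q$, so each invariant partition consists of $q$ blocks of size $p$ or of $p$ blocks of size $q$; in the first case $G/\mathcal B$ has prime degree $q$ and $\fix_G(\mathcal B)|_B$ has prime degree $p$, and in the second these roles are exchanged. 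I would also record the useful constraint that the orbits of a Sylow $p$-subgroup of $G$ have size $1$ or $p$, since $p^2>pq$; this is what lets me control which block size a given (normal) subgroup produces.

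For part~(1), where by hypothesis every nontrivial partition has $p$ blocks of size $q$, the degree-$p$ quotient $G/\mathcal B$ cannot be of affine type: if $G/\mathcal B\le\AGL(1,p)$ then its regular normal subgroup of order $p$ lifts, through its preimage in $G$, to a normal subgroup whose orbits would form $q$ blocks of size $p$ (or else would exhibit a regular cyclic subgroup), contradicting either the hypothesis or the assumption that $G$ has no regular cyclic subgroup. Hence $G/\mathcal B$ is $2$-transitive of prime degree $p$, and I would feed this into the CFSG-based classification of $2$-transitive groups of prime degree. The extra leverage is that $G$ is the automorphism group of a graph rather than a digraph, so the relevant orbitals are self-paired; matching this self-pairing requirement against each family in the classification is exactly what forces $p=2^{2^s}+1$ to be a Fermat prime with $q\mid p-2$. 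I expect the careful bookkeeping of the self-paired orbital condition across the classification — rather than any single structural step — to be the main obstacle in this half.

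For part~(2), with $\mathcal B$ the orbits of a normal subgroup $N$, all $N$-orbits have equal size $p$ or $q$; I would rule out size $q$ by running the part~(1) analysis on the resulting degree-$p$ quotient together with the Sylow $p$-orbit constraint, so that the reverse case yields a regular cyclic subgroup or the forbidden configuration, leaving $q$ blocks of size $p$. I would then build the transitive metacyclic subgroup by taking a $p$-element of $\fix_G(\mathcal B)$ acting as a $p$-cycle on a block $B$ (available since $\fix_G(\mathcal B)|_B$ is transitive of prime degree $p$) together with a $q$-element of $G$ whose image generates a transitive subgroup of the degree-$q$ quotient $G/\mathcal B$, obtaining a metacyclic group of order $ap$, and I would take $H$ maximal among such subgroups. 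The final case division comes from applying Burnside to the block: if $\fix_G(\mathcal B)|_B$ is $2$-transitive, the classification of $2$-transitive groups of prime degree $p$, combined with the graph/self-paired constraint and the fact that $G$ preserves the block structure, forces $q=2$ with the groups $\Z_2\ltimes\PSL(2,11)$ or $\Z_2\ltimes\PGammaL(n,k)$ under the stated conditions on $n$ and $k$; if $\fix_G(\mathcal B)|_B$ is not $2$-transitive, it is affine and solvable, and a Sylow/order comparison of $a$ with $q$ shows that $G$ is itself metacyclic of order $ap$ when $a\neq q$, while the maximal metacyclic $H$ (of order $qp$) is normal in $G$ when $a=q$.
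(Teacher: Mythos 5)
First, a point of orientation: the paper does not prove this statement at all. It is an external result, quoted (with minor repackaging, combining Lemma 2.5 and Theorem 3.2(2bii) of the cited work) from \cite[Theorem 3.2]{Dobson2006a} ``for the reader's convenience.'' So there is no internal proof to compare yours against; what you have attempted is a from-scratch reproof of a cited classification theorem, and it has to be judged on its own terms.

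Judged that way, there are genuine gaps. The most concrete one is your lifting argument in part (1): you claim that if $G/{\mathcal B}\le\AGL(1,p)$, then the preimage $M$ in $G$ of the regular normal subgroup of order $p$ has orbits forming $q$ blocks of size $p$ (or yields a regular cyclic subgroup). But $M$ contains $\fix_G({\mathcal B})$, and if $\fix_G({\mathcal B})$ acts transitively on each block of size $q$, then $M$ is transitive on all $pq$ points, so its orbits form the trivial partition and no contradiction arises; your trichotomy simply fails in that case. (The actual proof in \cite{Dobson2006a} first establishes $\fix_G({\mathcal B})=1$ under part (1)'s hypotheses --- a step the paper's own remark highlights and which is absent from your plan.) This error propagates: in part (2) you propose to rule out blocks of size $q$ ``by running the part (1) analysis,'' but part (1) has strictly stronger hypotheses ($G$ is the full automorphism group of a \emph{graph}, and \emph{every} nontrivial invariant partition has $p$ blocks of size $q$) and its conclusion is a Fermat-prime constraint, not a contradiction, so it cannot be invoked to force the blocks of ${\mathcal B}$ to have size $p$. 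Finally, and most fundamentally, your sketch never uses the hypothesis that $G$ is $2$-closed, yet without it the theorem is false: no Sylow or order comparison can show, as in 2(b)(i), that $G$ \emph{equals} a metacyclic group of order $ap$ --- that identification requires arguments about orbital (di)graphs and $2$-closures. Likewise, every genuinely hard conclusion (why $q=2$ in 2(a), why exactly $\Z_2\ltimes\PSL(2,11)$ and $\Z_2\ltimes\PGammaL(n,k)$ arise, why the self-paired-orbital bookkeeping yields $p=2^{2^s}+1$ with $q\mid p-2$) is deferred with phrases like ``forces'' or ``I expect''; those deferred items are not routine verifications but the actual content of the theorem, so what you have is a plausible outline with incorrect reductions rather than a proof.
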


It should be noted that in the proof of part (1) of \cite[Theorem 3.2]{Dobson2006a}, it is deduced that $\fix_G(\mathcal B)=1$, so that $\mathcal B$ is not formed by the orbits of a normal subgroup, so the statement above reflects the fact that only part (2) can arise if $\mathcal B$ is formed by the orbits of a normal subgroup. Furthermore, what we have stated for part (2bii) requires combining the statements of Lemma 2.5 and Theorem 3.2 (2bii) of \cite{Dobson2006a}.

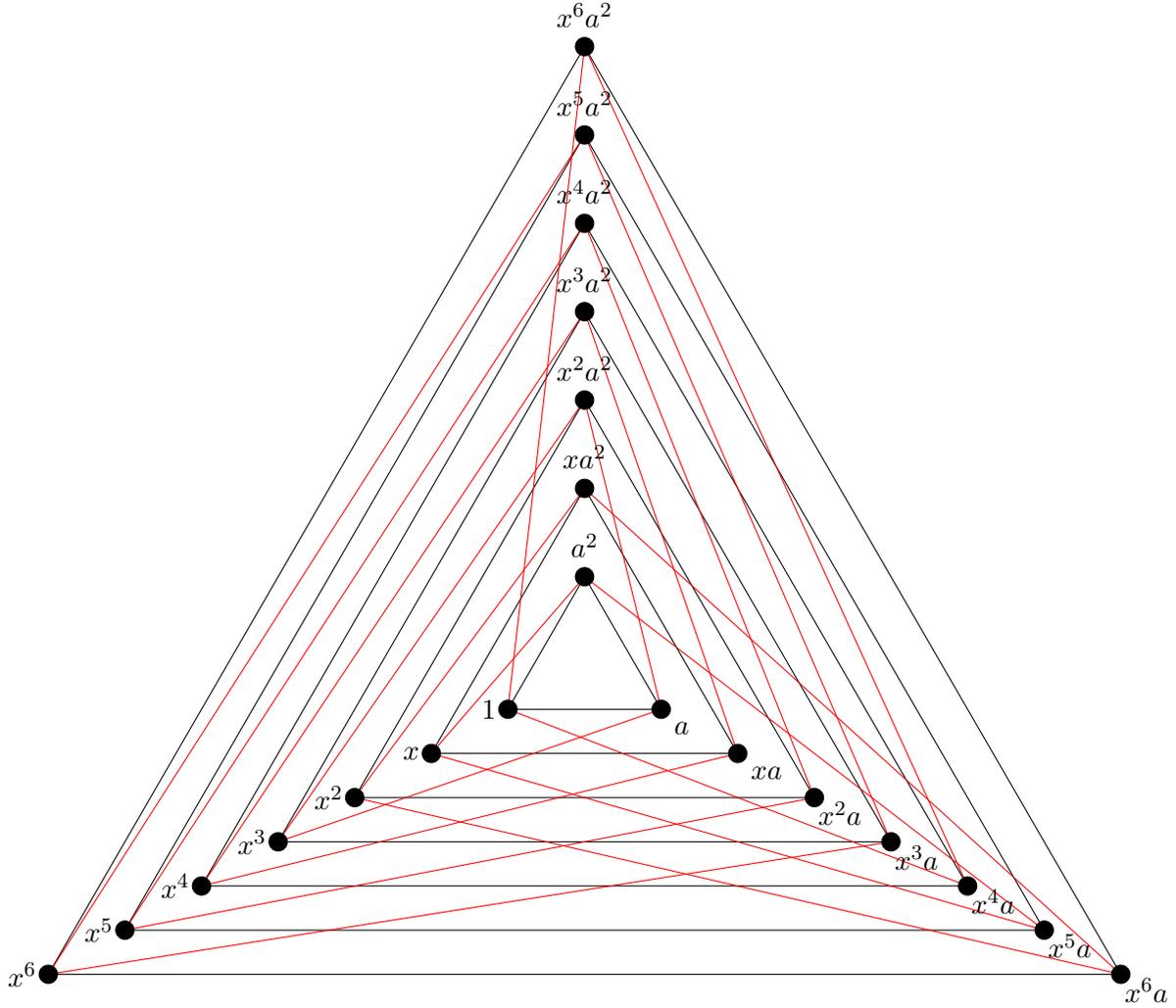
\begin{figure}
\begin{tikzpicture}
\def \dif {1.2}
\foreach \t in {1,...,3}
{
 \foreach \r in {1,...,7}
 {
 \draw({90+ \t*120}:{\dif*\r})--({90+ (\t-1)*120}:{\dif*\r});
 }
}
\foreach \r in {0,...,6}
{
\pgfmathsetmacro{\rtwo}{mod(\r+1,7)}
\pgfmathsetmacro{\rthree}{mod(\r+5,7)}
 \draw[color=red](90:{\dif*\r+\dif})--(210:\dif*\rtwo+\dif)--(330:{\dif*\rthree+\dif})--cycle;
}

\foreach \r in {3,...,7}
 {
 \node[label=above:$x^{\pgfmathparse{\r-1}\pgfmathprintnumber{\pgfmathresult}}a^2$] () at (90:{\dif*\r}) [circle, draw, fill=black!100, inner sep=2.5pt, minimum width=5pt]{};
 }
\node[label=above:$xa^2$] () at (90:{2*\dif}) [circle, draw, fill=black!100, inner sep=2.5pt, minimum width=5pt]{};
\node[label=above:$a^2$] () at (90:{\dif}) [circle, draw, fill=black!100, inner sep=2.5pt, minimum width=5pt]{};
\foreach \r in {3,...,7}
 {
 \node[label={[label distance=-3pt]left:$x^{\pgfmathparse{\r-1}\pgfmathprintnumber{\pgfmathresult}}$}] () at (210:{\dif*\r}) [circle, draw, fill=black!100, inner sep=2.5pt, minimum width=5pt]{};
 }
\node[label={[label distance=-3pt]left:$x$}] () at (210:{2*\dif}) [circle, draw, fill=black!100, inner sep=2.5pt, minimum width=5pt]{};
\node[label={[label distance=-3pt]left:$1$}] () at (210:{\dif}) [circle, draw, fill=black!100, inner sep=2.5pt, minimum width=5pt]{};
\foreach \r in {3,...,7}
 {
 \node[label={[label distance=-7pt]below right:$x^{\pgfmathparse{\r-1}\pgfmathprintnumber{\pgfmathresult}}a$}] () at (330:{\dif*\r}) [circle, draw, fill=black!100, inner sep=2.5pt, minimum width=5pt]{};
 }
\node[label={[label distance=-2pt]below right:$xa$}] () at (330:{2*\dif}) [circle, draw, fill=black!100, inner sep=2.5pt, minimum width=5pt]{};
\node[label={[label distance=-2pt]below right:$a$}] () at (330:{\dif}) [circle, draw, fill=black!100, inner sep=2.5pt, minimum width=5pt]{};
\end{tikzpicture}
\caption{The unique non-CCA Cayley graph of $F_{21}$.}\label{F21nCCA}
\end{figure}

\begin{prop}\label{uniqueF21nonCCA}
The Cayley graph $\Cay(F_{21},\{a^{\pm 1},(ax)^{\pm 1}\})$ is the only non-CCA graph of
$F_{21}$ up to isomorphism.  It is regular of valency $4$, is an orbital graph of the
primitive group $\PGL(2,7)$, and has imprimitive color-preserving automorphism group
${\mathcal A}^o = \PSL(2,7)$.
\end{prop}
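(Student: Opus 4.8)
The plan is to determine $\mathcal{A}^o$ exactly and read every assertion off from it. First, since $a^{-1}xa=x^2$ gives $(ax)^3=e$, both $a$ and $ax$ have order $3$, so $a^{\pm1},(ax)^{\pm1}$ are four distinct elements and $\Gamma$ is $4$-regular; as $\la a,ax\ra=\la a,x\ra=F_{21}$ it is connected. The colour classes $E_{c(a)}$ and $E_{c(ax)}$ are exactly the edge sets of the seven triangles that are the left cosets of $\la a\ra$, respectively of $\la ax\ra$. To exhibit symmetry I would realise $F_{21}$ as the normaliser of a Sylow $7$-subgroup inside $\PSL(2,7)\cong\PSL(3,2)$; since $\gcd(21,8)=1$ this copy of $F_{21}$ is regular on the $21$ flags of the Fano plane (the cosets of a point stabiliser $D_8$). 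Under this identification the two coset partitions above become the partition of flags by their point and by their line, both preserved by $\PSL(3,2)$; hence $\PSL(2,7)\le\mathcal{A}^o$, and each colour is a self-paired suborbit of size $2$. The graph automorphism (duality) interchanging points and lines generates $\PGL(2,7)=\PSL(2,7).2$, which is primitive of degree $21$ and swaps the two colours, so $\Gamma$ is an orbital graph of $\PGL(2,7)$.

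For the reverse containment I would use that $\mathcal{A}^o$ is $2$-closed (\cref{color preserving 2-closed}) and, by \cref{calAoimprimitive}, imprimitive with the block system $\mathcal{B}$ of $7$ blocks of size $3$ given by the cosets of $\la a\ra$. Since $\mathcal{A}^o$ preserves two distinct partitions into $7$ triangles while a regular cyclic group has a unique subgroup of order $3$, $\mathcal{A}^o$ has no regular cyclic subgroup, so \cref{Teds} applies with $p=7,q=3$. Any invariant partition formed by the orbits of a normal subgroup would, by part (2) of \cref{Teds}, consist of $3$ blocks of size $7$; as the orbits of a normal subgroup all have equal size, this forces $\fix_{\mathcal{A}^o}(\mathcal{B})=1$. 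Thus $\mathcal{A}^o$ acts faithfully on the $7$ blocks, embedding in $S_{F_{21}/\mathcal{B}}\cong S_7$ as a transitive group containing the Frobenius group $\mathbb{Z}_7\rtimes\mathbb{Z}_3$; the only such groups are $\mathbb{Z}_7\rtimes\mathbb{Z}_3,\ \mathbb{Z}_7\rtimes\mathbb{Z}_6,\ \PSL(2,7),\ A_7,\ S_7$. The degree-$21$ actions of $A_7$ and $S_7$ are primitive, contradicting imprimitivity, and $\mathbb{Z}_7\rtimes\mathbb{Z}_3,\ \mathbb{Z}_7\rtimes\mathbb{Z}_6$ are too small to contain $\PSL(2,7)\le\mathcal{A}^o$; hence $\mathcal{A}^o=\PSL(2,7)$. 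Finally $F_{21}$ is a (self-normalising) maximal subgroup of $\PSL(2,7)$, so the affine colour-preserving automorphisms are $\mathcal{A}^o\cap N_{S_{F_{21}}}(G_L)=N_{\PSL(2,7)}(G_L)=G_L\subsetneq\mathcal{A}^o$; therefore $\Gamma$ has a non-affine colour-preserving automorphism and is not a CCA graph.

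For uniqueness I would run the same analysis on an arbitrary connected non-CCA graph $\Cay(F_{21},S)$: connectedness forces an element of order $3$ in $S$, hence a block system of $7$ blocks of size $3$, and the argument above again gives $\mathcal{A}^o\in\{\mathbb{Z}_7\rtimes\mathbb{Z}_3,\ \mathbb{Z}_7\rtimes\mathbb{Z}_6,\ \PSL(2,7)\}$, the first two being affine (in each case $G_L$ is normal, so they lie in $N_{S_{F_{21}}}(G_L)$) and hence CCA; so $\mathcal{A}^o=\PSL(2,7)$. Since $\PSL(2,7)\le\mathcal{A}^o$, each colour class (a single $F_{21}$-orbit of $21$ edges) must be $\PSL(2,7)$-invariant, i.e. a single $\PSL(2,7)$-orbit of edges; a self-paired suborbit of size $\delta$ yields $21\delta/2$ edges, so $\delta=2$. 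Computing the subdegrees of $\PSL(2,7)$ on the $21$ flags (they are $1,2,2,4,4,8$) shows that the only suborbits of size $2$ are the point- and line-adjacencies, namely the inverse pairs $\{a^{\pm1}\}$ and $\{(ax)^{\pm1}\}$. Hence every inverse pair of $S$ is one of these two, and connectedness forces $S=\{a^{\pm1},(ax)^{\pm1}\}$, so $\Cay(F_{21},S)\cong\Gamma$.

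The main obstacle is the exact determination of $\mathcal{A}^o$: proving $\mathcal{A}^o\le\PSL(2,7)$ via the reduction $\fix_{\mathcal{A}^o}(\mathcal{B})=1$ and the careful invocation of \cref{Teds} (in particular, cleanly excluding a regular cyclic subgroup so the theorem applies and matching the block sizes), together with the subdegree-and-self-pairing computation on the $21$ flags that drives the uniqueness step. Once $\mathcal{A}^o=\PSL(2,7)$ is secured, the valency, the orbital-graph description with $\PGL(2,7)$, the imprimitivity of $\mathcal{A}^o$, and the non-CCA conclusion all follow at once.
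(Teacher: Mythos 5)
Your analysis of the specific graph $\Cay(F_{21},\{a^{\pm 1},(ax)^{\pm 1}\})$ is essentially sound and takes a genuinely different route from the paper's (the Fano-flag model to get $\PSL(2,7)\le{\mathcal A}^o$ directly, then killing $\fix_{{\mathcal A}^o}({\mathcal B})$ via \cref{Teds}(2) and embedding ${\mathcal A}^o$ into $S_7$). The genuine gap is in the uniqueness step: there, ``the argument above'' cannot simply be rerun. Your entire use of \cref{Teds} is conditional on ${\mathcal A}^o$ containing no regular cyclic subgroup, and your proof of that fact used the \emph{two} distinct invariant partitions into triangles, which exist only because the specific graph has two colour classes of order-$3$ elements. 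For an arbitrary connected non-CCA graph $\Cay(F_{21},S)$, the set $S$ may contain only one inverse pair of elements of order $3$ together with elements of order $7$ (e.g.\ $S=\{a^{\pm 1},x^{\pm 1}\}$); then you have only one triangle partition, the ``unique subgroup of order $3$ in $\Z_{21}$'' argument yields nothing, and $\Z_{21}\le{\mathcal A}^o$ has not been excluded. Since the hypotheses of \cref{Teds} explicitly exclude regular cyclic subgroups, your whole chain ($\fix_{{\mathcal A}^o}({\mathcal B})=1$, faithful action on the $7$ blocks, the list $\{F_{21},F_{42},\PSL(2,7)\}$) collapses in that case. This is not a removable technicality: the paper devotes a full case to it, invoking the Klin--P\"oschel classification \cite{KlinP1981} of imprimitive $2$-closed groups of degree $pq$ containing a regular cyclic subgroup, reducing to ${\mathcal A}^o\le N_{S_{21}}((\Z_{21})_L)$ by using that point stabilizers in ${\mathcal A}^o$ are $2$-groups, and then producing an element of order $3$ fixing a point to reach a contradiction. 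Your proposal needs this case, or some substitute for it, before uniqueness is proved.

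Two smaller remarks. Your claim that every transitive degree-$21$ action of $A_7$ and of $S_7$ is primitive is true but unjustified (it needs that every index-$21$ subgroup is maximal); it is simpler, and closer to the paper's toolkit, to rule these out because their point stabilizers in such an action have order $120$ resp.\ $240$, which are not $2$-groups, contradicting \cite[Lemma 6.3]{HujdurovicKMM2016}. On the positive side, once the regular cyclic case is closed, your endgame --- each colour class is a single ${\mathcal A}^o$-orbit of $21$ edges, hence corresponds to a self-paired suborbit of length $2$, and the subdegrees $1,2,2,4,4,8$ of $\PSL(2,7)$ leave only the two ``same point''/``same line'' pairs, forcing $S$ up to $\Aut(F_{21})$ --- is a conceptual replacement for the MAGMA verification with which the paper concludes, and would make the result computer-free.
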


\begin{proof}
Let $\Gamma = \Cay(F_{21},S)$ be a non-CCA graph of $F_{21}$.  Then $(F_{21})_L\le {\mathcal A}^o$ is not normal in ${\mathcal A}^o$, which is $2$-closed by Lemma \ref{color preserving 2-closed}.  As $7$ is not a Fermat prime, by \cref{Teds}(1) $\Aut(\Gamma)$ is either primitive, contains a regular cyclic subgroup, or has an invariant partition ${\mathcal B}$ with blocks of size $7$ formed by the orbits of a normal subgroup.  Observe that in the latter case if $G$ is any transitive subgroup of $\Aut(\Gamma)$, then $G$ contains at least one element of order $7$. Since $G/{\mathcal B}\le S_3$ has order coprime to $7$, every element of order $7$ must lie in $\fix_G({\mathcal B})$, so (using transitivity) the orbits of $\fix_G({\mathcal B})$ are the blocks of ${\mathcal B}$.

Suppose that ${\mathcal A}^o$ does not contain a regular cyclic subgroup and $\Aut(\Gamma)$ is not primitive.  Then ${\mathcal A}^o$ has an invariant partition $\mathcal B$ with blocks of size $7$ formed by the orbits of a normal subgroup.  As $21$ is odd, it cannot be the case that \cref{Teds}(2a) occurs.  But either part of \cref{Teds}(2b) implies that $(F_{21})_L\tl\Aut(\Gamma)$, a contradiction.  Thus either ${\mathcal A}^o$ contains a regular cyclic subgroup or $\Aut(\Gamma)$ is primitive.

Suppose that ${\mathcal A}^o$ contains a regular cyclic subgroup and $\Aut(\Gamma)$ is imprimitive, admitting $\mathcal B$.  The $2$-closed permutation groups of degree a product of two distinct primes $p$ and $q$ that contain a regular cyclic subgroup and are imprimitive with blocks of size $p$ are known \cite{KlinP1981} (another proof is given in \cite{Dobson2006a}, while descriptions for square-free $n$ are given in \cite{DobsonM2005} and all integers in \cite{Li2005}).  They are either contained in $N_{S_{pq}}((\Z_{pq})_L)$, or are permutation equivalent to $A\wr B$ or $A\times B$ where $A$ and $B$ are $2$-closed groups of degree $q$ and $p$, respectively. Furthermore, in the last of these cases, at least one of $A$ and $B$ must be a symmetric group on at least 5 points or we have $A\times B\le N_{S_{pq}}((\Z_{pq})_L)$; in our case, this means $\mathcal A^o$ is equivalent to $A \times S_7$.
As by \cite[Lemma 6.2]{HujdurovicKMM2016} we have $\Stab_{{\mathcal A}^o}(x)$ is a $2$-group for every vertex $x$ we see that the only possibility is
${\mathcal A}^o\le N_{S_{21}}((\Z_{21})_L)$, so $|\mathcal A^o| \mid 2^2 \cdot 3^2 \cdot 7=|N_{S_{21}}((\Z_{21})_L)|$.  We see that $\mathcal A^o$ has a unique Sylow $7$-subgroup of order 7 that lies in both $F_{21}$ and $(\Z_{21})_L$, and whose orbits are the blocks of $\mathcal B$. However, the elements of order $3$ in these two regular groups (each of which acts cyclically of order $3$ on the blocks of $\mathcal B$) are not the same, since one centralizes the Sylow $7$-subgroup but the other does not. Therefore, by combining one such element from $F_{21}$ with one such element of $(\Z_{21})_L$, we can obtain an element of $\fix_{\mathcal A^o}(\mathcal B)$ that is not the identity, but that fixes some point. Calculations show that this element in fact has order $3$, which again contradicts \cite[Lemma 6.2]{HujdurovicKMM2016}.

Finally, suppose that $\Aut(\Gamma)$ is primitive. We know that $\Aut(\Gamma)$ is not doubly-transitive because this would imply $\Gamma$ being complete (since it is connected, it cannot be empty), and \cref{complete-CCA} shows that the complete Cayley graph on $F_{21}$ is CCA. There are only two primitive permutation groups of degree 21: $\PGL(2,7)$, and $A_7$ in its action on ordered pairs \cite[Appendix B]{DixonM1996}. But $A_7$ in this action does not contain a regular subgroup, by \cite[Lemma 3.1]{PraegerX1993}. So we must have $\Aut(\Gamma)\cong \PGL(2,7)$. We know that  $\mathcal A^o$ contains $F_{21}\le \PSL(2,7)\triangleleft \PGL(2,7)$. Since $F_{21}$ is a maximal subgroup of $\PSL(2,7)$ by the ATLAS of Finite Group Representations \cite{Atlas}, we have $\mathcal A^o \cap \PSL(2,7)$ is either $F_{21}$ or $\PSL(2,7)$.

Suppose $\mathcal A^o\cap \PSL(2,7)=(F_{21})_L$. Then $\mathcal A^o$ is one of the maximal subgroups of $\PGL(2,7)$ that do not contain $\PSL(2,7)$; but these are $F_{21}$ and $F_{42}= \Z_7 \rtimes Z_6$, and $F_{21}$ is normal in each of these, so such a graph would be CCA. So we must have $\mathcal A^o \cap \PSL(2,7)=\PSL(2,7)$. Since $\PSL(2,7)$ is maximal (of index 2 in fact) in $\PGL(2,7)$, and $\PGL(2,7)$ is primitive but $\mathcal A^o$ is imprimitive by \cref{calAoimprimitive}, we must have $\mathcal A^o\cong\PSL(2,7)$.

Now, by \cite[Example 2.3]{HujdurovicKMM2016} $\Gamma_1 = \Cay(F_{21},\{a^{\pm 1},(ax)^{\pm 1}\})$ is a non-CCA graph of $F_{21}$ and of course has valency $4$.  By the above argument, we must have $\Aut(\Gamma_1)=\PGL(2,7)$. As $\PGL(2,7)$ has three suborbits of lengths $4,8,8$ by the ATLAS of Finite Group Representations \cite{Atlas}, we conclude that $\Gamma_1$ is an orbital graph of $\PGL(2,7)$.  One can then check, for example with MAGMA \cite{MAGMA}, that $\Gamma_1$ is the only Cayley graph of $F_{21}$ with automorphism group $\PGL(2,7)$ that is not a CCA graph of $F_{21}$.
\end{proof}

\begin{hey}
{\rm While the Cayley graph $\Gamma = \Cay({\mathbb F}_{21},\{a^{\pm 1},(ax)^{\pm 1}\})$ given in preceding Proposition is unique up to isomorphism, there are $21$ different choices for $S$ which will yield a graph isomorphic to $\Gamma$, each of which is the image of $\Gamma$ under an automorphism of ${\mathbb F}_{21}$ by an element of the unique subgroup of $\Aut({\mathbb F}_{21})$ of order $21$. To see this, recall that $\Aut(\Gamma)\cong \PGL(2,7)$ is primitive. Now, $\Z_{21}$ is a Burnside group, so that a primitive group containing a regular copy of $\Z_{21}$ must be doubly-transitive, which $\Aut(\Gamma)$ clearly is not. Thus, $\Aut(\Gamma)$ does not contain a regular cyclic subgroup. Furthermore, $7^2$ does not divide $|\Aut(\Gamma)|=336$. Thus, by
 \cite[Theorem 9]{Dobson1998} any Cayley graph of ${\mathbb F}_{21}$ is isomorphic to $\Gamma$ if and only if an isomorphism between the two graphs is in $\Aut({\mathbb F}_{21})$.  It is not hard to show this group has order $42$ (it is isomorphic to $F_{42}=\Z_7 \rtimes \Z_6$, and consists of the inner automorphisms together with the outer automorphism of order $2$), and that $\Aut({\mathbb F}_{21})\cap\Aut(\Gamma)$ has order $2$ (only the outer group automorphism acts as a graph automorphism).}
\end{hey}

\section{Structure of non-CCA graphs of odd square-free order}

\begin{defin}\label{Cartesianproduct}
The {\bf Cartesian product} $G\boxprod H$ of two graphs $G$ and $H$ is the graph with vertex set $V(G)\times V(H)$ and edge set $\{(u,v)(u',v'):u = u'{\rm\ and \ } u'v'\in E(H){\rm\ or\ }v = v'{\rm\ and\ }uu'\in E(G)\}$.
\end{defin}

\begin{defin}
Let $G$ be a group and $K\le G$.  We denote the subgroup $\{k_L:k\in K\}$ of $G_L$ by $\hat{K}_L$.
\end{defin}

We now give a sufficient condition for a Cayley color graph (with the natural edge-coloring defined in \cref{def:coloring}) of odd order to be a Cartesian product. This will be a crucial tool in our main result.

\begin{lem}\label{maintool}
For a connected Cayley color graph (with the natural edge-coloring defined in \cref{def:coloring}) $\Cay(G,S)$ of a group $G$ of odd order, let ${\mathcal B}$ be an ${\mathcal A}$-invariant partition where $G_L\le {\mathcal A}\le{\mathcal A}^o$ is transitive.  Define an equivalence relation $\equiv$ on $G$ by $g\equiv h$ if and only if $\Stab_{\fix_{{\mathcal A}}({\mathcal B})}(g) = \Stab_{\fix_{{\mathcal A}}({\mathcal B})}(h)$. Let $\Gamma$ be the graph obtained from $\Cay(G,S)$ by removing all edges both of whose endpoints are contained in some block $B\in{\mathcal B}$. The following hold:
\begin{enumerate}
\item the equivalence classes of $\equiv$ form an ${\mathcal A}$-invariant partition ${\mathcal E}$,
\item\label{fixercon2} each connected component of $\Gamma$ is contained in some $E\in{\mathcal E}$,
\item\label{fixercon3} if each $E\in{\mathcal E}$ contains exactly one element from each $B\in{\mathcal B}$, then $\Cay(G,S) \cong \Gamma_1\boxprod\Gamma_2$, where $\Gamma_1$ is a connected component of $\Gamma$ and
$\Gamma_2 = \Cay(G,S)[B_1]$, with $B_1\in{\mathcal B}$. Furthermore, there exist $G_1,G_2 \le G$ such that $\Gamma_1\cong
\Cay(G_1,S_1)$, and $\Gamma_2=\Cay(G_2,S_2)$, where $S_i=S\cap G_i$ for $i=1,2$ and
$S=S_1\cup S_2$, and the natural edge-coloring given in \cref{def:coloring} is preserved under the isomorphism; and $G=G_1\times G_2$.
\end{enumerate}
\end{lem}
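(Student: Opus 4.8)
The plan is to treat the three parts in order, the first two being quick and the third carrying all the weight. For (1), write $N=\fix_{{\mathcal A}}({\mathcal B})$, which is normal in ${\mathcal A}$ since it is the kernel of the action on ${\mathcal B}$. For $a\in{\mathcal A}$ and $g\in G$ one has $\Stab_N(a(g))=a\,\Stab_N(g)\,a^{-1}$ (using $N\tl{\mathcal A}$), so $g\equiv h$ implies $a(g)\equiv a(h)$; thus $\equiv$ is an ${\mathcal A}$-congruence and its classes form an ${\mathcal A}$-invariant partition ${\mathcal E}$. For (2) I would first record that, since $G_L$ is regular, every $G_L$-invariant partition (in particular ${\mathcal B}$) is the partition into left cosets of a subgroup $H\le G$, with $H$ the block through $1_G$. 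An edge of $\Gamma$ joins $g$ to $gs$ with $gs$ in a different block, i.e. $s\notin H$; because $|G|$ is odd, $\la s\ra=\la s^2\ra$, so $gs$ and $gs^{-1}$ cannot lie in the same block. Hence exactly one color-$c(s)$ neighbour of $g$ lies in the block of $gs$; as any $n\in N$ fixes blocks setwise and preserves colours, $n\in\Stab_N(g)$ forces $n(gs)=gs$. Thus adjacent vertices of $\Gamma$ are $\equiv$-equivalent, giving (2).

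For (3), set $G_2:=H=B_1$ (choosing $B_1$ to be the block through $1_G$), so $\Gamma_2=\Cay(G,S)[H]=\Cay(H,S\cap H)$ and $S_2=S\cap H$. By (1) and the regularity of $G_L$, the partition ${\mathcal E}$ is likewise the coset partition of a subgroup $G_1:=E_1$ (the class through $1_G$), and by (2) the component $\Gamma_1$ lies inside $G_1$. The hypothesis that each class meets each block exactly once says precisely that $G=G_1H$ with $G_1\cap H=\{1_G\}$. Moreover every $s\in S$ either lies in $H$ (an intra-block generator) or, being an inter-block generator, satisfies $1_G\equiv s$ by (2), i.e. $s\in E_1=G_1$; since $G_1\cap H=\{1_G\}$ this yields a disjoint splitting $S=S_1\sqcup S_2$ with $S_i=S\cap G_i$.

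The crux is to upgrade the factorisation $G=G_1H$ to a direct product, and here I expect the real difficulty: complementarity alone does not force a direct product (e.g. $F_{21}=\Z_7\rtimes\Z_3$), so the colour-preserving structure and the oddness of $|G|$ must be used. My plan is to show both factors are normal. Using (2) and the transversal hypothesis, the inter-block colour classes descend to a well-defined colour graph $\Delta$ on the block set ${\mathcal B}$, and $g\mapsto G_1\cap gH$ identifies $\Delta$ with $\Gamma_1=\Cay(G_1,S_1)$; left translation makes $G$ act on ${\mathcal B}$ by colour-preserving automorphisms of $\Delta$, so this action lands in ${\mathcal A}^o(\Gamma_1)$. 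The block-stabiliser is exactly $\hat H_L$, and it fixes the vertex corresponding to $H$; since $G_1$ has odd order, \cite[Lemma 6.2]{HujdurovicKMM2016} makes $\Stab_{{\mathcal A}^o(\Gamma_1)}(1_{G_1})$ a $2$-group, while the image of $\hat H_L$ is a quotient of the odd group $H$. A group that is simultaneously a $2$-group and of odd order is trivial, so $\hat H_L$ acts trivially on ${\mathcal B}$, i.e. $H\tl G$. The symmetric argument—projecting onto the class set ${\mathcal E}$, which carries a colour graph isomorphic to $\Gamma_2$, and using that $G_1$-translation fixes the vertex corresponding to $G_1$—gives $G_1\tl G$. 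With both factors normal, trivially intersecting, and with product $G$, we get $[G_1,H]\le G_1\cap H=\{1_G\}$ and hence $G=G_1\times G_2$.

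Finally, the direct-product structure makes everything fall into place: $G=\la S\ra=\la S_1\ra\times\la S_2\ra$ forces $\la S_1\ra=G_1$ and $\la S_2\ra=H$, so $\Gamma_1,\Gamma_2$ are connected and $|V(\Gamma_1)|=[G:H]$, matching $\Gamma_1=G_1$ exactly. Writing each $g\in G$ uniquely as $g=g_1g_2$ with $g_i\in G_i$, the map $g\mapsto(g_1,g_2)$ is the desired isomorphism: an edge coloured by $s\in S_2$ changes only the $g_2$-coordinate, while an edge coloured by $s\in S_1$ changes only the $g_1$-coordinate precisely because $g_1$ and $g_2$ commute, so colours are preserved and $\Cay(G,S)\cong\Cay(G_1,S_1)\boxprod\Cay(G_2,S_2)=\Gamma_1\boxprod\Gamma_2$. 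I expect the only genuinely delicate point to be this normality step, and in particular the bookkeeping needed to verify that the induced action on ${\mathcal B}$ (resp. ${\mathcal E}$) really is colour-preserving on a graph isomorphic to $\Gamma_1$ (resp. $\Gamma_2$).
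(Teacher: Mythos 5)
Parts (1) and (2) of your proposal are correct and are essentially the paper's own argument: (1) is the observation that $\equiv$ is an ${\mathcal A}$-congruence, and (2) uses odd order exactly as the paper does, to rule out $gs$ and $gs^{-1}$ lying in a common block. For (3) you genuinely depart from the paper: the paper builds the bijection $\delta(g)=(E_g,B_g)$ and tries to read off commutation of $S_1$ with $S_2$ directly, whereas you aim to prove that both factors are normal by letting $G$ act on ${\mathcal B}$ and on ${\mathcal E}$ and invoking the $2$-group point-stabilizer lemma of Hujdurovi\'c--Kutnar--Morris--Morris. The gap is exactly the step you set aside as ``bookkeeping'': the claim that the inter-block (resp.\ intra-block) color classes descend to a well-defined color graph on ${\mathcal B}$ (resp.\ on ${\mathcal E}$) isomorphic to $\Cay(G_1,S_1)$ (resp.\ $\Cay(G_2,S_2)$), on which left translation acts color-preservingly. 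Unwinding it on the ${\mathcal B}$-side, it says that for all $h\in H$ and $s\in S_1$ one has $s^{-1}hs\in H$ or $shs\in H$ (with the analogous statement on the ${\mathcal E}$-side); these are normality-type assertions of the same strength as the conclusion you are after, and nothing in the hypotheses delivers them. This is not bookkeeping; it is the entire content of part (3).

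Moreover, the step cannot be filled in from the stated hypotheses, because it is false for them. Take $G=F_{21}=\la a,x \mid a^3=x^7=1,\ a^{-1}xa=x^2\ra$ with $S=\{a^{\pm 1},x^{\pm 1}\}$, and let $\tau\in\Aut(G)$ be determined by $\tau(x)=x^{-1}$, $\tau(a)=a$. Then $\tau$ fixes each set $\{s,s^{-1}\}$, so ${\mathcal A}:=G_L\rtimes\la\tau\ra\le{\mathcal A}^o$ is transitive, and ${\mathcal B}:=\{g\la x\ra : g\in G\}$ is ${\mathcal A}$-invariant with $\fix_{\mathcal A}({\mathcal B})=\la x_L,\tau\ra$ dihedral of order $14$. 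A direct computation gives $\Stab_{\fix_{\mathcal A}({\mathcal B})}(x^ia^k)=\la x_L^{2i}\tau\ra$, so the classes of $\equiv$ are the cosets $x^i\la a\ra$, and each class meets each block in exactly one point: the hypothesis of (3) holds. Yet $\la a\ra$ is not normal, $F_{21}\ne\la a\ra\times\la x\ra$, and $\Cay(F_{21},S)\not\cong C_3\boxprod C_7$ (the former has no $4$-cycles, the latter has many). Concretely, your descended structure on ${\mathcal E}$ breaks down: since $a^kxa^{-k}=x^{4^k}$, the color-$c(x)$ edges inside the block $a^k\la x\ra$ join the class of $x^ia^k$ to the class of $x^{i+4^k}a^k$, so taking all blocks together the ``quotient'' on ${\mathcal E}$ is a complete graph rather than the $7$-cycle $\Gamma_2$, and the translation action of $a$ on ${\mathcal E}$, namely $x^i\la a\ra\mapsto x^{4i}\la a\ra$, is not even an automorphism of $\Gamma_2$, so the appeal to the $2$-group stabilizer lemma never gets off the ground. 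I would add, in fairness to you, that you have located the true crux: the paper's own proof asserts the corresponding compatibilities without argument (the equalities $E_{s_1s_2}=E_{s_2}$ and $B_{s_2s_1}=B_{s_1}$, and the claim that $\delta$ sends intra-block edges to edges of $\Gamma_2$ of the same color), and the example above violates them as well. So your route is not salvageable as written, and any correct proof must import information beyond the hypotheses literally stated in the lemma.
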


\begin{proof}
As $G_L\le {\mathcal A}$, any invariant partition of ${\mathcal A}$ is also a $G_L$-invariant partition.  The partition ${\mathcal E}$ is an ${\mathcal A}$-invariant partition as the equivalence relation $\equiv$ is an ${\mathcal A}$-congruence \cite[Exercise 1.5.4]{DixonM1996}.  By \cite[Theorem 1.5A]{DixonM1996}, there exists a subgroup $G_2\le G$ such that ${\mathcal B}$ consists of the orbit of $(G_2)_L = \{k_L:k\in G_2\}$ that contains $1_G$ together with its images under $G_L$.  Now let $B_g\in {\mathcal B}$ with $g\in B_g$, and $h = gs\in G$ such that $h\not\in B_g$ and $s\in S$ (so $(g,gs)\in E(\Gamma)$).  Then $h\in B_h\in{\mathcal B}$ for some $B_h\not = B_g$.

Let $\alpha\in \Stab_{\fix_{{\mathcal A}}({\mathcal B})}(g)$.  We first claim that $\alpha(h) = h$.  Indeed, as $\alpha\in{\mathcal A}^o$ and $\alpha(g) = g$, we have that $\alpha(h) = h = gs$ or $\alpha(h) = gs^{-1}$.  If $\alpha(h) = gs^{-1}$, then as $\alpha(B) = B$ for all $B\in{\mathcal B}$, it must be the case that $gs^{-1},gs\in B_h$.  
So there exists $k\in G_2$ such that $gsk = gs^{-1}$ so that $s^2 = k^{-1}$.  As $k^{-1}\in  G_2$ while $s\not\in G_2$ we have that $\la k\ra < \la s\ra$ and as $s^2\in\la k\ra$ we see that the order of $s$ is even.  However, $G$ has odd order, a contradiction.  Thus $\alpha(h) = h$ completing the claim.

The claim implies that if $\alpha\in\fix_{{\mathcal A}}({\mathcal B})$ fixes $1_G$, then it fixes the neighbors of $1_G$ not contained in $B\in{\mathcal B}$ with $1_G\in B$.  Arguing inductively, $\alpha$ fixes all vertices that are words formed by elements contained in $S_1=S\backslash B$.  We conclude that $\alpha$ fixes every vertex in the connected component of $\Gamma$ that contains $1_G$. Clearly all of these vertices are contained in the equivalence class of $\equiv$ that contains $1_G$, and (\ref{fixercon2}) follows. This also shows that the vertices of this connected component consist of the subgroup $G_1$ of $G$ that is generated by $S_1$.

Now we additionally assume that $\Stab_{\fix_{{\mathcal A}}({\mathcal B})}(1_G)$ fixes exactly one element from every block of ${\mathcal B}$.  We have seen that $\Gamma_1=\Cay(G_1,S_1)$ where $S_1=S\setminus B$, and $\Gamma_2=\Cay(G_2,S_2)$ where $S_2=S\cap B$, so $S=S_1\cup S_2$. For each $g\in G$, let $B_g\in{\mathcal B}$ contain $g$ and $E_g\in{\mathcal E}$ contain $g$. As $\Cay(G,S)$ is connected and $G$ is transitive, it must be the case that $\Gamma_1$ is isomorphic to the induced subgraph on $E_g$, and that $\Gamma_2$ is connected.  Also, an element $g\in G$ is uniquely determined by the pair $(E_g,B_g)\in{\mathcal E}\times {\mathcal B}$ where $g\in E_g$ and $g\in B_g$.  Define $\delta:G\rightarrow {\mathcal E}\times{\mathcal B}$ by $\delta(g) = (E_g,B_g)$.  As $V(\Gamma_1) = E_{i}$ for some $i$ and $\vert E_i\cap B\vert = 1$ for every $B\in{\mathcal B}$, we identify each vertex of $V(\Gamma_1)$ uniquely with the block of ${\mathcal B}$ it is contained in.  Similarly, as $V(\Gamma_2) = B_1$, we identify each vertex of $V(\Gamma_2)$ with the block of ${\mathcal E}$ that it is contained in.  We claim that $\delta$ is an isomorphism between $\Cay(G,S)$ and $\Gamma_1\boxprod\Gamma_2$ that preserves the edge colors.

Let $e=gh\in E(\Cay(G,S))$.  As every edge of $\Cay(G,S)$ is either contained in $\Gamma$ (recall that $\Gamma$ is the graph obtained from $\Cay(G,S)$ by removing all edges both of whose endpoints are contained in some block $B\in{\mathcal B}$) or $\Cay(G,S)\backslash E(\Gamma)$, both endpoints of $e$ are contained in a component of $\Gamma$ (which is the same as a block of ${\mathcal E}$) or a block of ${\mathcal B}$.  In the former case, $\delta(gh) = (E_g,B_g)(E_h,B_h)$ and $E_g = E_h$ while $B_gB_h\in E(\Gamma_1)$, so that $h=gs$ for some $s \in S_1$ and this color is preserved by the isomorphism.  In the latter case, $\delta(gh) = (E_g,B_g)(E_h,B_h)$ and $B_g = B_h$ while $E_gE_h\in E(\Gamma_2)$, so that $h=gs$ for some $s \in S_2$ and this color is preserved by the isomorphism.  So $\delta$ is an isomorphism as claimed. We still need to show that $G=G_1\times G_2$.

It is clear that $G_1\cap G_2=\{1\}$ and that $|G|=|G_1||G_2|$, so we need only show that elements of $G_1$ commute with elements of  $G_2$. Observe that for any $s_1 \in S_1$ and any $s_2 \in S_2$,  $(E_{s_1s_2},B_{s_1s_2})=(E_{s_1s_2},B_{s_1})=(E_{s_2},B_{s_1})$. Similarly, $(E_{s_2s_1},B_{s_2s_1})=(E_{s_2},B_{s_2s_1})=(E_{s_2},B_{s_1})$. Thus, $s_1s_2=s_2s_1$, and since $G_1=\langle S_1 \rangle$ and $G_2=\langle S_2\rangle$,  we have $G=G_1\times G_2$ and (\ref{fixercon3}) follows.
\end{proof}


We need three more preliminary results before turning to our main result.

\begin{lem}\label{quotient color-preserving}
Let $G$ be a group and $S\subseteq G$.  Let $N\triangleleft G$, ${\mathcal B}$ be the orbits of $N_L$ and ${\mathcal A}^o$ the group of color-preserving automorphism of $\Cay(G,S)$.  If ${\mathcal B}$ is an ${\mathcal A}^o$-invariant partition, and $\alpha\in{\mathcal A}^0$, then $\alpha/{\mathcal B}$ is also a color-preserving automorphism of $\Cay(G/N,S/N)$, where $S/N = \{sN:s\in S\}$.
\end{lem}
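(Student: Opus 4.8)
The plan is to analyze the induced map $\bar\alpha := \alpha/{\mathcal B}$ on the blocks directly, identifying ${\mathcal B}$ with the vertex set $G/N$ of $\Cay(G/N,S/N)$, and to check the color-preserving condition by lifting each quotient edge to an edge of $\Cay(G,S)$, where the color-preserving property of $\alpha$ may be invoked.

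First I would set up the identifications. Since $N\triangleleft G$, the blocks in ${\mathcal B}$ are precisely the left cosets $gN$, and these are the vertices of $\Cay(G/N,S/N)$; because ${\mathcal B}$ is ${\mathcal A}^o$-invariant, $\bar\alpha$ is a well-defined bijection of $G/N$. The key preliminary observation is that $\alpha$ maps the block $gN$ to the unique block containing $\alpha(g)$, so $\bar\alpha(gN)=\alpha(g)N$ as a coset. I would also note that $S/N=(S/N)^{-1}$ follows from $S=S^{-1}$, so the natural coloring of $\Cay(G/N,S/N)$ from \cref{def:coloring} is defined, with the edge $(gN,gsN)$ (for $s\in S$ with $sN\neq N$) receiving the color $c(sN)$ whose class is $\{sN,s^{-1}N\}$.

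The heart of the argument is a single local computation. Given an edge $(gN,gsN)$ of color $c(sN)$, I would lift it to the edge $(g,gs)$ of $\Cay(G,S)$ and use that $\alpha$ is color-preserving, which gives $\alpha(gs)\in\{\alpha(g)s,\alpha(g)s^{-1}\}$. Projecting back and using that $G/N$ is a group (where the normality of $N$ is essential), this becomes
\[
\bar\alpha(gsN)=\alpha(gs)N\in\{\alpha(g)sN,\,\alpha(g)s^{-1}N\}=\{\bar\alpha(gN)\cdot sN,\ \bar\alpha(gN)\cdot s^{-1}N\},
\]
which is exactly the assertion that $\bar\alpha$ carries this edge to an edge of the same color $c(sN)$, and to a genuine edge rather than a loop since $s^{\pm1}\notin N$. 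Since $\bar\alpha$ is a bijection on vertices it is injective on edges, and as each color class is finite and is mapped into itself, $\bar\alpha$ restricts to a bijection of each color class; this is precisely the color-preserving condition and in particular shows that $\bar\alpha$ is a graph automorphism.

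I do not anticipate a genuine obstacle, since the content is essentially bookkeeping. The two steps that need care are verifying the coset identity $\bar\alpha(gN)=\alpha(g)N$, which relies on the normality of $N$ together with the invariance of ${\mathcal B}$, and confirming that the possible collapse of distinct color classes $\{s,s^{-1}\}$ into a single quotient class $\{sN,s^{-1}N\}$ causes no difficulty. The latter is indeed harmless, precisely because the verification fixes one lifted representative edge and never requires the lift to be unique.
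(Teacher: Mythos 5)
Your proposal is correct and follows essentially the same route as the paper: lift a quotient edge $(gN,gsN)$ to an edge $(g,gs)$ of $\Cay(G,S)$, apply the color-preserving property of $\alpha$ to get $\alpha(gs)\in\{\alpha(g)s,\alpha(g)s^{-1}\}$, and project back to the quotient using normality of $N$ so that $\alpha(g)s^{-1}N=(\alpha(g)N)(sN)^{-1}$. The extra bookkeeping you include (well-definedness of $\bar\alpha$ on cosets, closure of $S/N$ under inverses, the loop caveat when $s\in N$) is harmless detail that the paper's proof leaves implicit.
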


\begin{proof}
Let $\alpha\in\Aut(\Cay(G,S))$ be a color-preserving automorphism.   As ${\mathcal A}^o/{\mathcal B}\le\Aut(\Cay(G,S)/{\mathcal B})$, we have $\alpha/{\mathcal B}\in \Aut(\Cay(G,S)/{\mathcal B})$.  Let $sN\in S/N$ and $g\in G$.  As $\alpha$ is a color-preserving automorphism of $\Cay(G,S)$, we have $\alpha(g,gs) = (h,hs)$ or $(h,hs^{-1})$ for some $h\in G$.  Then $\alpha(gN,(gN)(sN))=\alpha(gN,gsN) = (hN,hsN)$ or $(hN,hs^{-1}N)$. Since $hsN=(hN)(sN)$ and $hs^{-1}N=(hN)(s^{-1}N)=(hN)(sN)^{-1}$, we see that $\alpha/{\mathcal B}$ is a color-preserving automorphism of $\Cay(G/N,S/N)$.
\end{proof}

\begin{lem}\label{bottomsemiregular}
Let $G$ be a group of odd square-free order, and $\Cay(G,S)$ a Cayley graph such that ${\mathcal A}^o$ admits a normal invariant partition ${\mathcal B}$. Then the orbits of $\fix_{G_L}(\mathcal B)$ are the blocks of $\mathcal B$. In particular, if $\fix_{\mathcal A^o}(\mathcal B)$ is semiregular, then $\fix_{\mathcal A^o}(\mathcal B)=\fix_{G_L}(\mathcal B)$.
\end{lem}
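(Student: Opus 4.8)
The plan is to translate the statement into the subgroup lattice of $G$ and reduce the first assertion to a single normality fact. Since $G_L\le{\mathcal A}^o$ acts regularly, the block $H\in{\mathcal B}$ containing $1_G$ is a subgroup of $G$ and the blocks of ${\mathcal B}$ are the left cosets of $H$ (as in the use of \cite[Theorem 1.5A]{DixonM1996} inside \cref{maintool}). A translation $g_L$ fixes the block $xH$ setwise exactly when $x^{-1}gx\in H$, so $\fix_{G_L}({\mathcal B})=\bigl(\bigcap_{x\in G}xHx^{-1}\bigr)_L$, the left regular representation of the normal core of $H$ in $G$. Its orbits are the cosets of this core, and these coincide with the blocks (the cosets of $H$) if and only if $H$ equals its own core, that is, if and only if $H\triangleleft G$. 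Thus the first assertion is equivalent to proving that $H$ is normal in $G$.

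Next I would record what the hypothesis provides. Writing $N=\fix_{{\mathcal A}^o}({\mathcal B})$, which is always normal in ${\mathcal A}^o$ as the kernel of the action on ${\mathcal B}$, the assumption that ${\mathcal B}$ is formed by the orbits of a normal subgroup is equivalent to $N$ being transitive on each block: any normal subgroup whose orbits are ${\mathcal B}$ is contained in $N$, while $N$ itself fixes every block setwise, so its orbits are precisely the blocks. Note also that $\fix_{G_L}({\mathcal B})=G_L\cap N$. Because $|G|$ is square-free, $|H|$ and $[G:H]$ are coprime, so $H$ is a Hall subgroup of the (solvable) group $G$; the subtlety is that a Hall subgroup of a group of square-free order need \emph{not} be normal (for instance the Sylow $3$-subgroup of $F_{21}$), so normality of $H$ must be extracted from the fact that ${\mathcal B}$ is cut out by the \emph{normal} subgroup $N$, and not merely from its being a block system.

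The main step, and the expected obstacle, is therefore to deduce $H\triangleleft G$ from the transitivity of $N$ on blocks, together with the $2$-closedness of ${\mathcal A}^o$ (\cref{color preserving 2-closed}) and the odd square-free degree. My plan is to invoke the classification of $2$-closed permutation groups of square-free degree that contain a regular subgroup (\cite{DobsonM2005,Li2005}; the degree-$pq$ case is exactly \cref{Teds}): up to permutation isomorphism such a group is assembled from the normalizer $N_{S_G}(G_L)=G_L\rtimes\Aut(G)$ of the regular group and from wreath and direct products of $2$-closed groups of prime degree. The oddness of $|G|$ is what deletes the exceptional configurations, namely the Fermat-prime alternative of \cref{Teds}(1) and the doubly transitive layers of \cref{Teds}(2a), leaving only those configurations in which a normal block system is cut out by a normal subgroup of the regular group. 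The clearest model is the prime block-size case, where the argument of \cref{uniqueF21nonCCA} applies almost verbatim: every element of order $p=|H|$ of $G_L$ must act inside the blocks, hence lies in $\fix_{G_L}({\mathcal B})=G_L\cap N$, and transitivity then forces the orbits of $\fix_{G_L}({\mathcal B})$ to be the blocks. For composite block size I would proceed layer by layer along a chain of normal partitions with factors of prime size (available since groups of square-free order are supersolvable), using at each step that the restriction of $N$ to a block is again a $2$-closed color-preserving group of an induced Cayley graph of strictly smaller odd square-free order, and building $H$ up as an iterated extension of normal subgroups. I expect the genuine work, and the place where the normal-partition hypothesis is indispensable, to be the composite-size and normalizer cases, where transitivity of $N$ must be played against the metacyclic structure of $G$ to rule out the non-normal Hall subgroups that exist purely group-theoretically.

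Finally, the ``in particular'' clause is immediate once $H\triangleleft G$. In that case $\fix_{G_L}({\mathcal B})=H_L$ is regular on each block, so each block has size $|H|$; since $\fix_{G_L}({\mathcal B})\le\fix_{{\mathcal A}^o}({\mathcal B})$ and the latter is transitive on each block, semiregularity makes it regular there as well, whence $|\fix_{{\mathcal A}^o}({\mathcal B})|=|H|=|\fix_{G_L}({\mathcal B})|$, and together with the containment this forces $\fix_{{\mathcal A}^o}({\mathcal B})=\fix_{G_L}({\mathcal B})$.
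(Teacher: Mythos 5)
Your opening reduction (that the first assertion is equivalent to $H\triangleleft G$, where $H$ is the block of $\mathcal B$ containing $1_G$) and your final paragraph (the ``in particular'' clause) are both correct, and the latter is essentially the paper's own argument. But the heart of the lemma---that the blocks are the orbits of $\fix_{G_L}(\mathcal B)$---is never actually proved in your proposal; it is only planned. You propose to invoke the classification of $2$-closed groups of square-free degree containing a regular subgroup, plus an unexecuted ``layer by layer'' induction, and you say yourself that you expect ``the genuine work'' to lie precisely in the cases you have not done. Moreover, the one case you do sketch, prime block size, is circular: the claim that every element of order $p=|H|$ in $G_L$ ``must act inside the blocks'' is exactly what has to be proved. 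It held in \cref{uniqueF21nonCCA} only because there the quotient action on blocks embeds in $S_3$, whose order is coprime to $7$. For blocks of prime size $3$ in a group of order $21$ with $H$ a non-normal subgroup of order $3$ (so with trivial core), the quotient action has order divisible by $3$ and the claim is simply false; your sketch nowhere invokes the normal-partition hypothesis, so it cannot distinguish this configuration from an admissible one.

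The idea you are missing is an elementary Sylow argument, which is the paper's entire proof and requires no classification theorems. Let $k$ be the common block size and let $p\mid k$ be prime. Since $\mathcal B$ consists of the orbits of a normal subgroup of $\mathcal A^o$, the group $\fix_{\mathcal A^o}(\mathcal B)$ is transitive on each block, so $p$ divides $|\fix_{\mathcal A^o}(\mathcal B)|$; let $P$ be a nontrivial Sylow $p$-subgroup of $\fix_{\mathcal A^o}(\mathcal B)$. If $p$ did not divide $|\fix_{G_L}(\mathcal B)|$, then the Sylow $p$-subgroup $P'$ of $G_L$ (of order exactly $p$, as $n$ is square-free) would not lie in $\fix_{\mathcal A^o}(\mathcal B)$, whereas $P$ does. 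Since $\fix_{\mathcal A^o}(\mathcal B)\triangleleft \mathcal A^o$, the subgroups $P$ and $P'$ are then not conjugate in $\mathcal A^o$, so they cannot both be Sylow $p$-subgroups of $\mathcal A^o$; this forces $p^2$ to divide $|\mathcal A^o| = n\,|\Stab_{\mathcal A^o}(1_G)|$, and hence, as $n$ is square-free, $p$ divides $|\Stab_{\mathcal A^o}(1_G)|$. But point stabilizers in $\mathcal A^o$ are $2$-groups by \cite[Lemma 6.3]{HujdurovicKMM2016}, so $p=2$, contradicting that $n$ is odd. Thus every prime divisor of $k$ divides $|\fix_{G_L}(\mathcal B)|$, and since $\fix_{G_L}(\mathcal B)$ is semiregular with orbits contained in blocks, its orbits are exactly the blocks. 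Note where the two hypotheses earn their keep: normality of the partition enters through the conjugacy obstruction, and the $2$-group stabilizer fact---an ingredient your proposal never uses---is what makes oddness decisive.
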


\begin{proof}
Let $k$ be the size of each $B \in \mathcal B$, so $k$ is odd and square-free. Suppose to the contrary that for some prime divisor $p$ of $k$, $p$ does not divide $|\fix_{G_L}(\mathcal B)|$. Now, a Sylow $p$-subgroup $P$ of $\fix_{{\mathcal A}^o}({\mathcal B})$ has order $p^i$ for some $i \ge 1$. Let $P'$ be a Sylow $p$-subgroup of $G_L$. Then $P'$ has order $p$, and by assumption does not lie in $\fix_{{\mathcal A}^o}({\mathcal B})$. Since $P$ lies in a normal subgroup (namely, $\fix_{{\mathcal A}^o}({\mathcal B})$) of $\mathcal A^o$ that does not contain $P'$, we see that $P$ and $P'$ are not conjugate in $\mathcal A^o$. Thus, $P$ and $P'$ cannot both be Sylow $p$-subgroups of $\mathcal A^o$. This means that $p^2$ must divide $|\mathcal A^o|$. Hence $\Stab_{{\mathcal A}^o}(1)$ is divisible by $p$.  This implies $p = 2$ (since every point-stabilizer of $\mathcal A^o$ is a $2$-group by \cite[Lemma 6.3]{HujdurovicKMM2016}), a contradiction. We conclude that $k$ divides $|\fix_{G_L}(\mathcal B)|$, so that the orbits of $\fix_{G_L}(\mathcal B)$ are the blocks of $\mathcal B$.

If $\fix_{\mathcal A^o}(\mathcal B)$ is semiregular, then $|\fix_{\mathcal A^o}(\mathcal B)| \le k$. Since $\fix_{G_L}(\mathcal B) \le \fix_{\mathcal A^o}(\mathcal B)$ is semiregular of order $k$, we must have $\fix_{\mathcal A^o}(\mathcal B)=\fix_{G_L}(\mathcal B)$.
\end{proof}

\begin{lem}\label{cartesian-CCA}
Let $G$ be a group, and $\Gamma=\Cay(G,S)$ a Cayley graph on $G$. Suppose that $G=G_1 \times G_2$, where $\Gamma_1=\Cay(G_1,S_1)$ and $\Gamma_2=\Cay(G_2,S_2)$, for some $S_1\subset G_1,S_2 \subset G_2$, and that $\Gamma=\Gamma_1\boxprod\Gamma_2$. Further assume that $\Gamma_1$ and $\Gamma_2$ have no common factors with respect to Cartesian decomposition.
If $\Gamma_1$ and $\Gamma_2$ are both CCA graphs (on $G_1$ and $G_2$ respectively), then $\Gamma$ is a CCA graph on $G$.
\end{lem}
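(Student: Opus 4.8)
The plan is to show that every color-preserving automorphism $\varphi$ of $\Gamma=\Gamma_1\boxprod\Gamma_2$ splits as a coordinatewise product $\varphi=\tau\times\sigma$ with $\tau\in\Aut(\Gamma_1)$ and $\sigma\in\Aut(\Gamma_2)$, to check that $\tau$ and $\sigma$ are themselves color-preserving, and then to invoke the CCA hypothesis on each factor. The coordinatewise decomposition is exactly where the ``no common factors'' hypothesis enters: by the Sabidussi--Vizing structure theorem for automorphisms of Cartesian products of connected graphs, $\Aut(\Gamma)$ is generated by the automorphisms of the prime Cartesian factors together with the permutations of isomorphic prime factors. Hence when $\Gamma_1$ and $\Gamma_2$ share no common Cartesian factor, every automorphism of $\Gamma$ preserves the splitting of coordinates into those coming from $\Gamma_1$ and those coming from $\Gamma_2$, giving $\Aut(\Gamma)=\Aut(\Gamma_1)\times\Aut(\Gamma_2)$ acting coordinatewise. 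Since a Cartesian product is connected if and only if both factors are, and the statement is only needed for connected $\Gamma$, I would assume $\Gamma_1,\Gamma_2$ are connected, i.e. $\langle S_1\rangle=G_1$ and $\langle S_2\rangle=G_2$.

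For a color-preserving $\varphi$ this decomposition can in fact be seen directly, without the full structure theorem: the color sets $\{c(s):s\in S_1\}$ and $\{c(s):s\in S_2\}$ are disjoint, so $\varphi$ preserves each of the edge-colored subgraphs $\Cay(G,S_1)$ and $\Cay(G,S_2)$, and therefore permutes the connected components of each. As $\langle S_1\rangle=G_1$ and $\langle S_2\rangle=G_2$, these components are precisely the $\Gamma_1$-layers $G_1\times\{v\}$ and the $\Gamma_2$-layers $\{u\}\times G_2$. Thus $\varphi$ induces a permutation $\sigma$ of $G_2$ with $\varphi(G_1\times\{v\})=G_1\times\{\sigma(v)\}$ and a permutation $\tau$ of $G_1$ with $\varphi(\{u\}\times G_2)=\{\tau(u)\}\times G_2$; since each vertex $(u,v)$ is the unique common point of its two layers, $\varphi(u,v)=(\tau(u),\sigma(v))$.

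Next I would verify that $\tau$ and $\sigma$ are color-preserving automorphisms of $\Gamma_1$ and $\Gamma_2$. Tracking a color-$c(s_1)$ edge $\{(u,v),(us_1,v)\}$ with $s_1\in S_1$, its image $\{(\tau(u),\sigma(v)),(\tau(us_1),\sigma(v))\}$ must again carry color $c(s_1)$, which forces $\tau(us_1)\in\{\tau(u)s_1,\tau(u)s_1^{-1}\}$; hence $\tau$ is a color-preserving automorphism of $\Gamma_1$, and symmetrically $\sigma$ is one of $\Gamma_2$. Since $\Gamma_1$ and $\Gamma_2$ are CCA graphs, $\tau$ and $\sigma$ are affine, say $\tau=(g_1)_L\alpha_1$ and $\sigma=(g_2)_L\alpha_2$ with $g_i\in G_i$ and $\alpha_i\in\Aut(G_i)$.

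Finally, because $G=G_1\times G_2$ is a direct product, the coordinatewise map $\alpha_1\times\alpha_2$ is a group automorphism of $G$ and $(g_1,g_2)_L=(g_1)_L\times(g_2)_L\in G_L$, so $\varphi(u,v)=(g_1\alpha_1(u),g_2\alpha_2(v))=\bigl((g_1,g_2)_L\circ(\alpha_1\times\alpha_2)\bigr)(u,v)$ exhibits $\varphi$ as affine. As $\varphi$ was an arbitrary color-preserving automorphism, $\Gamma$ is a CCA graph. I expect the only genuine obstacle to be the coordinatewise decomposition of $\varphi$: one must either invoke the Cartesian-product automorphism theorem correctly (where connectivity of the factors and the no-common-factors hypothesis are essential to rule out interchanging factors) or argue via disjointness of the two color classes as above; every subsequent step is routine.
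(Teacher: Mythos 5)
Your proof is correct, and it does more than the paper's own argument. The paper's proof is exactly your first route: it invokes \cite[Corollary 15.6]{ImrichKR2008} (a consequence of the Sabidussi--Vizing theory you cite) to get $\Aut(\Gamma)=\Aut(\Gamma_1)\times\Aut(\Gamma_2)$ from the no-common-factor hypothesis, and then applies the CCA property coordinatewise to an automorphism fixing $1_G$. Your second, direct argument is a genuinely different and more elementary route: since $S_1\subset G_1\setminus\{1_G\}$, $S_2\subset G_2\setminus\{1_G\}$ and $G_1\cap G_2=\{1_G\}$, the natural coloring assigns disjoint color sets to $S_1$ and $S_2$, so a color-preserving $\varphi$ preserves each spanning subgraph $\Cay(G,S_1)$ and $\Cay(G,S_2)$ and permutes their components, which (given connectivity of the factors) are exactly the $\Gamma_1$- and $\Gamma_2$-layers; this forces $\varphi=(\tau,\sigma)$ to act coordinatewise. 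This buys two things the paper's proof does not have: it avoids the external product-graph structure theorem entirely, and it shows the ``no common factors'' hypothesis is superfluous for this lemma, since a color-preserving automorphism can never interchange isomorphic factors (the factor swap of, say, $K_3\boxprod K_3$ permutes the two color classes rather than fixing them). You are also more careful than the paper on two points it leaves implicit: checking that the induced maps $\tau$ and $\sigma$ are themselves color-preserving (the paper jumps from $\alpha_i\in\Aut(\Gamma_i)$ directly to applying the CCA hypothesis, which concerns only color-preserving automorphisms), and flagging that connectivity of $\Gamma_1$ and $\Gamma_2$ is needed both for your component argument and for the cited Corollary 15.6, which is a statement about connected graphs; your observation that the lemma is only applied to connected $\Gamma$, whose Cartesian factors are then automatically connected, is the right way to discharge this.
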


\begin{proof}
By \cite[Corollary 15.6]{ImrichKR2008}, we have $\Aut(\Gamma) = \Aut(\Gamma_1)\times\Aut(\Gamma_2)$. Although their statement assumes that we have a prime factorization (with respect to the Cartesian product), this only affects the outcome if some prime factor is common to $\Gamma_1$ and $\Gamma_2$, which we have assumed is not the case.

If $\alpha$ is an automorphism of $\Gamma$ that fixes $1_G$, then this implies that $\alpha$ can be written as $(\alpha_1, \alpha_2)$, where $\alpha_i\in \Aut(\Gamma_i)$ for $i=1,2$. Since $\Gamma_i$ is a CCA graph on $G_i$, this implies that $\alpha_i$ is an automorphism of $G_i$. It is then easy to see that $\alpha=(\alpha_1, \alpha_2)$ is an automorphism of $G=G_1\times G_2$.
\end{proof}

The following result is the main result of this paper.

\begin{thrm}
Let $n$ be odd and square-free, $G$ a group of order $n$, and $\Gamma = \Cay(G,S)$ be a connected Cayley graph of $G$.  Then $\Gamma$ is a non-CCA Cayley graph of $G$ if and only if
\begin{enumerate}
\item\label{con1} $n$ is divisible by $21$ and $G = G_1\times F_{21}$, where $G_1$ is a group of order $n/21$ and $F_{21}$ is the nonabelian group of order $21$, and
\item\label{con2} $\Gamma = \Gamma_1\boxprod\Gamma_2$, where $\Gamma_1$ is a CCA graph of order $n/21$ and $\Gamma_2$ is the unique non-CCA graph of order $21$.
\end{enumerate}
\end{thrm}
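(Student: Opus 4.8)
The plan is to prove the two implications separately, handling sufficiency directly and necessity by induction on $n$. For sufficiency, suppose \eqref{con1} and \eqref{con2} hold. Since $\Gamma_2$ is the non-CCA graph of $F_{21}$ from \cref{uniqueF21nonCCA}, it admits a color-preserving automorphism $\varphi_2$ fixing $1_{F_{21}}$ that is not a group automorphism (any element of $\PSL(2,7)$ outside the affine subgroup). Define $\varphi$ on $V(\Gamma)=G_1\times F_{21}$ by $\varphi(g_1,y)=(g_1,\varphi_2(y))$. As a product of automorphisms of the two Cartesian factors, $\varphi\in\Aut(\Gamma)$, and it is color-preserving because it fixes every edge colored by a generator in $G_1$ and permutes the edges colored by generators in $F_{21}$ exactly as $\varphi_2$ does. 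Since $\varphi$ fixes $1_G$, it is affine only if it is a group automorphism of $G$; but its restriction to $\{1\}\times F_{21}$ equals $\varphi_2\notin\Aut(F_{21})$, so $\varphi$ is non-affine and $\Gamma$ is non-CCA.

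For necessity I induct on $n$. By \cref{bigpreviousresult} a non-CCA $\Gamma$ of odd square-free order forces $21\mid n$ (and in particular $G$ is non-abelian, hence non-cyclic); write $n=21m$ with $\gcd(m,21)=1$. The base case $n=21$ is \cref{uniqueF21nonCCA} together with the fact that $\Z_{21}$ is CCA (being abelian, it has no section isomorphic to $F_{21}$), so $G=F_{21}$, $G_1$ is trivial, and $\Gamma$ is the claimed graph. For $n>21$ the heart of the argument is to produce, via \cref{maintool}, a \emph{nontrivial} Cartesian decomposition $\Gamma=\Gamma_1\boxprod\Gamma_2$ with $G=G_1\times G_2$. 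Granting this, the proof finishes quickly: because $n$ is square-free the orders $|G_1|$ and $|G_2|$ are coprime, so $\Gamma_1$ and $\Gamma_2$ share no Cartesian prime factor and \cref{cartesian-CCA} applies. As $\Gamma$ is non-CCA, at least one factor, say $\Gamma_2$, is non-CCA, whence $21\mid|G_2|$ by \cref{bigpreviousresult}; then $21\nmid|G_1|$ forces $\Gamma_1$ to be CCA. Applying the inductive hypothesis to $\Gamma_2$ writes $G_2=K\times F_{21}$ and $\Gamma_2=\Delta\boxprod\Gamma_{21}$ with $\Gamma_{21}$ the unique non-CCA graph of $F_{21}$; absorbing $K,\Delta$ into the first factor gives $G=(G_1\times K)\times F_{21}$ and $\Gamma=(\Gamma_1\boxprod\Delta)\boxprod\Gamma_{21}$, and the first factor has order $n/21$ coprime to $21$, hence is CCA by \cref{bigpreviousresult}. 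This is precisely \eqref{con1} and \eqref{con2}.

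Everything therefore reduces to the crux: for $n>21$, find an ${\mathcal A}^o$-invariant partition ${\mathcal B}$ satisfying the hypothesis of \cref{maintool}(\ref{fixercon3}). Since $G$ is non-cyclic, \cref{calAoimprimitive} shows ${\mathcal A}^o$ is imprimitive; I would take ${\mathcal B}$ to be a normal invariant partition, namely the orbits of $N_L$ for a suitable nontrivial proper $N\triangleleft G$ (such $N$ exist because groups of square-free order are metacyclic and supersolvable). By \cref{bottomsemiregular} the orbits of $\fix_{G_L}({\mathcal B})$ are then exactly the blocks of ${\mathcal B}$, and \cref{quotient color-preserving} pushes color-preserving automorphisms down to $\Cay(G/N,S/N)$. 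The decisive point is to choose $N$ so that the non-affine behavior of ${\mathcal A}^o$ lies entirely inside $\fix_{{\mathcal A}^o}({\mathcal B})$ and acts there with stabilizers that separate points block-by-block, i.e.\ so that each $\equiv$-class meets each block exactly once. Here the constraints that point-stabilizers in ${\mathcal A}^o$ are $2$-groups (\cite[Lemma 6.3]{HujdurovicKMM2016}) and that ${\mathcal A}^o$ is $2$-closed (\cref{color preserving 2-closed}), fed into the classification of $2$-closed imprimitive groups of degree $pq$ in \cref{Teds}, should force the only surviving non-CCA configuration at the relevant level to be $pq=21$ with $p=7$, $q=3$ realizing $F_{21}$, exactly as in the proof of \cref{uniqueF21nonCCA}.

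The main obstacle is precisely this last construction. I expect it to require a case analysis over the possible socle and Sylow structure of ${\mathcal A}^o$, showing that when ${\mathcal B}$ is chosen compatibly with the $F_{21}$-section, the fixer $\fix_{{\mathcal A}^o}({\mathcal B})$ is semiregular away from the $F_{21}$-factor (so by \cref{bottomsemiregular} it coincides there with $\fix_{G_L}({\mathcal B})$ and contributes nothing non-affine) while acting diagonally as $\PSL(2,7)$ across the $F_{21}$-cosets. It is this diagonal action that makes the $\equiv$-classes transversal to ${\mathcal B}$, which is exactly what \cref{maintool}(\ref{fixercon3}) needs. Verifying transversality, and ruling out the primitive and regular-cyclic alternatives of \cref{Teds} at each level just as in \cref{uniqueF21nonCCA}, is where the real work lies.
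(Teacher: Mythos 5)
Your sufficiency argument and your inductive endgame are sound and essentially match the paper's (the paper simply cites \cite[Proposition 3.1]{HujdurovicKMM2016} for sufficiency, and once a Cartesian decomposition $\Gamma = \Gamma_1\boxprod\Gamma_2$ with $G = G_1\times G_2$ compatible with \cref{maintool} is in hand, your coprimality/absorption argument is the same as the paper's). The gap is that you never actually produce such a decomposition: everything is deferred to an ``expected'' case analysis, and the sketch you give for that crux would not work as stated. First, you propose taking ${\mathcal B}$ to be the orbits of $N_L$ for a nontrivial proper $N\tl G$. But the cosets of a normal subgroup of $G$ are in general only $G_L$-invariant, not ${\mathcal A}^o$-invariant, and every tool you want to apply (\cref{maintool}, \cref{bottomsemiregular}, \cref{quotient color-preserving}) requires ${\mathcal A}^o$-invariance --- \cref{bottomsemiregular} even requires ${\mathcal B}$ to be formed by the orbits of a normal subgroup of ${\mathcal A}^o$. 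The paper avoids this by taking $N$ to be a minimal normal subgroup of ${\mathcal A}^o$ itself, whose orbits are automatically an ${\mathcal A}^o$-invariant partition; it must then also dispose of the case where $N$ is transitive (${\mathcal B} = \{G\}$), which imprimitivity of ${\mathcal A}^o$ via \cref{calAoimprimitive} does \emph{not} exclude, and it does so by an O'Nan--Scott argument together with Dickson's classification of subgroups of $\PSL(2,p)$. Second, your appeal to \cref{Teds} ``at each level'' cannot work for general $n$: that theorem concerns $2$-closed groups of degree $pq$, a product of exactly two primes, and the paper only ever uses it for degree $21$ inside \cref{uniqueF21nonCCA}, never in the main induction.

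Most seriously, the transversality hypothesis of \cref{maintool}(\ref{fixercon3}) genuinely fails in one of the two cases that must be confronted: when $\fix_{{\mathcal A}^o}({\mathcal B})$ is semiregular, all stabilizers $\Stab_{\fix_{{\mathcal A}^o}({\mathcal B})}(g)$ are trivial, so the $\equiv$-classes collapse to the single class $G$ and \cref{maintool} yields no decomposition at all. This is exactly the case the paper must treat by a different mechanism: quotient by $N$ (using \cref{quotient color-preserving} and \cref{bottomsemiregular}), apply induction to $\Cay(G/N,S/N)$ to decompose the \emph{quotient}, pull the resulting size-$21$ blocks back to an ${\mathcal A}^o$-invariant partition ${\mathcal C}$ with blocks of size $21p$, prove that $\fix_{{\mathcal A}^o}({\mathcal C})$ acts faithfully on its blocks and that $N$ is central in it, and finally use the fact that $\PSL(2,7)$ has Schur multiplier $\Z_2$ to force $p = 2$, contradicting that $n$ is odd (or else reduce to the non-semiregular case). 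Your sketch mentions pushing color-preserving automorphisms down to the quotient, but contains no mechanism for lifting the quotient's Cartesian decomposition back up to $G$; that lifting argument, not the transversality bookkeeping, is where the real work of the paper lies, and it is absent from your proposal.
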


\begin{proof}
Suppose that $G$ and $\Gamma$ satisfy (\ref{con1}) and (\ref{con2}).  Then $\Gamma$ is not CCA by \cite[Proposition 3.1]{HujdurovicKMM2016}.

Conversely, we proceed by induction on $n$, with the base case being $n = 21$ as by \cite[Theorem 6.8]{HujdurovicKMM2016} that is the smallest positive odd square-free integer for which there is a non-CCA Cayley graph of some group $G$, and that group is $G = F_{21}$.  By Proposition \ref{uniqueF21nonCCA}, there is a unique non-CCA Cayley digraph of $F_{21}$.  Let $n > 21$ and assume that the result holds for all $21 \le m < n$.  Let $G$ be a group of order $n$, and $\Gamma = \Cay(G,S)$ be connected.

Let $N$ be a minimal normal subgroup of ${\mathcal A}^o$, so that either $N$ is an elementary abelian $p$-group for some prime $p\vert n$, or $N$ is a direct product of isomorphic simple groups $T$.  The orbits of $N$ form an invariant partition ${\mathcal B}$ of ${\mathcal A}^o$.

Suppose ${\mathcal B} = \{G\}$ is a trivial invariant partition.  If $N$ is an elementary abelian $p$-group, then as $n$ is square-free we have $n = p$ is prime.  Now by Burnside's Theorem \cite[Theorem 3.5B]{DixonM1996}, ${\mathcal A}^o\le\AGL(1,p)$ so every element of ${\mathcal A}^o$ is affine, contradicting our assumption that $\Gamma$ is not CCA.  Otherwise, as $n$ is square-free, the O'Nan Scott Theorem implies that $\soc({\mathcal A}^o)$ is a simple group (see for example \cite[Lemma 2.1]{DobsonMMN2007}).  Additionally, by \cite[Lemma 6.3]{HujdurovicKMM2016} $\Stab_{\soc({\mathcal A}^o)}(v)$ is a $2$-group for any vertex $v$, and as observed in the proof of \cite[Theorem 6.8]{HujdurovicKMM2016}, the only nonabelian simple group that has a $2$-complement is $\PSL(2,p)$ where $p$ is a Mersenne prime.   Then $\Stab_{\soc({\mathcal A}^o)}(v) = D_{p + 1}$, which is a Sylow $2$-subgroup of $\PSL(2,p)$ as $p$ is a Mersenne prime. This then implies that $n = p(p-1)/2$.  Additionally, by Theorem \ref{bigpreviousresult} $G$ has a section isomorphic to $F_{21}$.  As $\soc({\mathcal A}^o) = \PSL(2,p)$, we see ${\mathcal A}^o\le \PGL(2,p)$, and $[\PGL(2,p):\PSL(2,p)] = 2$.  We conclude that every element of odd order in ${\mathcal A}^o$ is contained in $\PSL(2,p)$.  In particular, $G\le\PSL(2,p)$.  Consulting Dickson's Classification of the subgroups of $\PSL(2,q)$ \cite[Hauptsatz 8.27]{Huppert1967}, we see $G$ is a semi-direct product of an elementary abelian $p$-group of order $p$ and a cyclic group of order $t$ where $t$ is a divisor of $p - 1$.  So the commutator subgroup of $G$ is contained in a subgroup of $G$ of order $p$.  As $F_{21}$ is a section of $G$ and so the commutator subgroup of $G$ contains an element of order $7$, we conclude $p = 7$ so $n=21$, contradicting $n>21$.

Suppose ${\mathcal B}\not = \{G\}$, then remembering that $n$ is odd and applying \cite[Theorem 2.10]{DobsonMMN2007}, it follows that $N$ in its action on $B\in{\mathcal B}$, written $N\vert_B$, is either a simple group $T$ or $A_7^2$ of degree $105$.  As $\Stab_{{\mathcal A}^o}(v)$ is a $2$-group for every $v\in G$ by \cite[Lemma 6.3]{HujdurovicKMM2016} we see that $N\vert_B = T$. As noted earlier, the only nonabelian simple group that has a $2$-complement is $\PSL(2,p)$ where $p$ is a Mersenne prime.  Then $N\vert_B = T$ for every $B\in{\mathcal B}$ and $T$ is either cyclic of prime order or $T = \PSL(2,p)$ where $p$ is a Mersenne prime.

Now consider the action of $\fix_{{\mathcal A}^o}({\mathcal B})$ on $B\in{\mathcal B}$, and suppose that this action is not faithful.  Let $K$ be the kernel of this action, and $B'\in{\mathcal B}$ with $K\vert_{B'}\not = 1$.  As $K$ stabilizes each element of $B$, it must be the case that $K$ is a $2$-group by \cite[Lemma 6.3]{HujdurovicKMM2016}, and so $K\vert_{B'}$ is a normal $2$-subgroup of $\fix_{{\mathcal A}^o}({\mathcal B})\vert_{B'}$.  The orbits of $K\vert_{B'}$ then form an invariant partition of $\fix_{{\mathcal A}^o}({\mathcal B})\vert_{B'}$ with blocks of size a power of $2$, contradicting the assumption that $n$ is odd.  Thus the action of $\fix_{{\mathcal A}^o}({\mathcal B})$ on $B\in{\mathcal B}$ is faithful.

Now suppose that $\fix_{{\mathcal A}^o}({\mathcal B})$ is not semiregular of prime order.  If $T=\Z_p$ then $\fix_{{\mathcal A}^o}({\mathcal B})\cong\fix_{{\mathcal A}^o}({\mathcal B})\vert_B$, $B\in{\mathcal B}$ is isomorphic to a subgroup of $\AGL(1,p)$ that is not isomorphic to $\Z_p$, while if $T=\PSL(2,p)$ then $\fix_{{\mathcal A}^o}({\mathcal B})$ has socle $\PSL(2,p)$ and $p$ is a Mersenne prime.  Let $k$ be the size of the blocks of ${\mathcal B}$, and note that $k$ is odd.  In either case, we have that $\fix_{{\mathcal A}^o}({\mathcal B})\vert_B$ has order $k\cdot 2^\ell$ for some $\ell$, and so the stabilizer of a point in $\fix_{{\mathcal A}^o}({\mathcal B})\vert_B$ is a Sylow $2$-subgroup of $\fix_{{\mathcal A}^o}({\mathcal B})\vert_B$ for every $B\in{\mathcal B}$.  Additionally, as any two point stabilizers of a group are conjugate and Sylow $2$-subgroups are certainly conjugate, the stabilizer of a point in $\fix_{{\mathcal A}^o}({\mathcal B})$ of $b\in B$ is the stabilizer of a point in $\fix_{{\mathcal A}^o}({\mathcal B})$ of $b'\in B'$, where $B,B'\in{\mathcal B}$.  We conclude that the stabilizer of $b\in B$ in $\fix_{{\mathcal A}^o}({\mathcal B})$ fixes at least one point in every block of ${\mathcal B}$.  Additionally, the stabilizer of $b\in B$ in $\fix_{{\mathcal A}^o}({\mathcal B})$ will fix exactly one point in every block of ${\mathcal B}$ provided that the stabilizer of a point $b\in B$ of $\fix_{{\mathcal A}^o}({\mathcal B})$ in its action on $B$ fixes exactly one point.  We claim that this occurs.

If $T$ is cyclic of prime order, then the stabilizer of $b\in B$ in $\fix_{{\mathcal A}^o}({\mathcal B})$ fixes exactly one point in every block of ${\mathcal B}$ as in this case $\fix_{{\mathcal A}^o}({\mathcal B})\vert_B$ is primitive.  If $T = \PSL(2,p)$, then the stabilizer of a point of $\PSL(2,p)$ in its action on $B\in{\mathcal B}$ is $D_{p + 1}$ (a Sylow $2$-subgroup of $\PSL(2,p)$ as $p$ is a Mersenne prime).  If $p\not = 7$, then $D_{p+1}$ is maximal in $\PSL(2,p)$ by \cite[Theorem 2.2]{Giudici2016}. Since the stabilizer of a point is properly contained in the setwise stabilizer of any block containing that point, we see that in this case $\PSL(2,p)$ is primitive in its action on $B\in{\mathcal B}$.  Consequently, the stabilizer of a point in $\PSL(2,p)$ in its action on $B\in{\mathcal B}$ fixes exactly one point, and so the stabilizer of a point in the action of $\fix_{{\mathcal A}^o}({\mathcal B})$ on $B\in{\mathcal B}$ fixes exactly one point.  If $p = 7$, then $T \cong \PSL(2,7)$ which has order $168$. Since $n$ is odd and square-free, this forces each block $B$ to have length dividing $21$. The action of $T\vert_B$ can only be imprimitive if the block $B$ has length 21. In this case, $D_8$ is contained in $S_4$ and is not maximal, but the stabilizer of a point in $\PSL(2,7)$ under its action on $B\in{\mathcal B}$ fixes exactly one point by the ATLAS of Finite Group Representations \cite{Atlas}, since the lengths of the suborbits are 1, 2, 2, 4, 4, and 8.  If $\fix_{{\mathcal A}^o}({\mathcal B}) = \PSL(2,7)$, then our claim follows, and otherwise the claim also follows as we would then have that $\fix_{{\mathcal A}^o}({\mathcal B})$ is isomorphic to one of $\PGL(2,7)$ or $\PGammaL(2,7)$, each of which is primitive in its action on $B\in{\mathcal B}$, by the ATLAS of Finite Group Representations \cite{Atlas}.

We may now apply \cref{maintool} and conclude that there exist $G_1,G_2 \le G$ such that $S=S_1\cup S_2$ $G_i=\langle S_i\rangle$, $G=G_1\times G_2$, and $\Gamma = \Gamma_1'\boxprod\Gamma_2'$, where $\Gamma_{i}'=\Cay(G_i,S_i)$ for $i=1,2$. By \cref{cartesian-CCA}, there must be some $i \in \{1,2\}$ such that $\Gamma_i$ is not a CCA graph on $G_i$. By induction, this $\Gamma_i$ contains a factor that is the unique non-CCA graph of order $21$, and so we may rearrange the Cartesian factors of $\Gamma$ to write $\Gamma = \Gamma_1\boxprod\Gamma_2$, where $\Gamma_2$ is the unique CCA-digraph of order $21$, while $\Gamma_1$ is a CCA graph of order $n/21$ (induction implies that it is a CCA graph, since its order is not divisible by 21).  Finally, by \cite[Corollary 15.6]{ImrichKR2008}, we have $\Aut(\Gamma) = \Aut(\Gamma_1)\times\Aut(\Gamma_2)$ (since $n$ is square-free, $\Gamma_1$ and $\Gamma_2$ cannot contain any common factors in their Cartesian decomposition). Applying \cref{maintool} again, with $\mathcal A\cong\PSL(2,7)$ being the color-preserving subgroup of $\Aut(\Gamma_2)\cong \PGL(2,7)$  by \cref{uniqueF21nonCCA}, whose action as noted above fixes precisely one point in each copy of $\Gamma_2$, we see that there exist $G_1, G_2 \le G$ with $G=G_1\times G_2$ and $\Gamma_2\cong\Cay(G_2,S_2)$. Since $F_{21}$ is the unique regular subgroup of $\Aut(\Gamma_2)=\PGL(2,7)$, it follows that $G_2\cong F_{21}$, so that $G=G_1\times F_{21}$, as desired.

It now only remains to consider the case where $\fix_{{\mathcal A}^o}({\mathcal B})$ is semiregular of order $p$. Then $\fix_{{\mathcal A}^o}({\mathcal B})=\fix_{G_L}(\mathcal B)\tl G_L$ by Lemma \ref{bottomsemiregular}.  If $G_L/{\mathcal B}\tl {\mathcal A}^o/{\mathcal B}$, then $G_L\tl{\mathcal A}^o$ and $\Gamma$ is CCA (see \cite[Remark 6.2]{HujdurovicKMM2016}).  We conclude that $G_L/{\mathcal B}$ is not normal in ${\mathcal A}^o/{\mathcal B}$ and ${\mathcal A}^o/{\mathcal B}$ is contained in the color-preserving group of automorphisms $A$ of $\Cay(G/N,S/N) = \Cay(G,S)/N$ by Lemma \ref{quotient color-preserving}.  Then $\Cay(G/N,S/N)$ is not CCA, and so by the induction hypothesis $\Cay(G/N,S/N) = \Gamma_1'\boxprod\Gamma_2'$ where $\Gamma_1'$ is a CCA-graph of a group $H$ of order $n/(21p)$ and $\Gamma_2'$ is the unique non-CCA graph of $F_{21}$.  Then $A$ admits an invariant partition ${\mathcal C}/{\mathcal B}$ consisting of blocks of size $21$.  As ${\mathcal A}^o/{\mathcal B}\le A$, this then implies that ${\mathcal A}^o$ admits an invariant partition ${\mathcal C}$ with blocks of size $21p$.

Now, by the induction hypothesis $G/N = H\times F_{21}$ and so $\Aut(G/N,S/N)\le \Aut(H)\times\Aut(F_{21})$.  As $\Gamma_1'$ is CCA with respect to $H$, any color-preserving automorphism of $\Cay(G/N,S/N)$ that is not a group automorphism of $H\times F_{21}$ cannot normalize $F_{21}$, viewed as an internal subgroup of $H\times F_{21}$. Since $\Cay(G/N,S/N)$ is not CCA, there exists some color-preserving automorphism $\alpha/\mathcal B$ of $\Cay(G/N,S/N)$ such that $\alpha/\mathcal B$ is not a group automorphism of $H \times F_{21}$, where $\alpha \in \mathcal A^o$. Thus we see that $\alpha/{\mathcal B}$ cannot normalize $F_{21}$, again viewed as an internal subgroup of $H\times F_{21}$.  As $F_{21}$ is a maximal subgroup of $\PSL(2,7)$ which is the color-preserving group of automorphisms of $\Gamma_2'$ by \cref{uniqueF21nonCCA}, we see $\fix_{{\mathcal A}^o/{\mathcal B}}({\mathcal C}/{\mathcal B}) = \PSL(2,7)$.

Suppose that $\fix_{{\mathcal A}^o}({\mathcal C})$ is not faithful in its action on $C\in{\mathcal C}$, with $K$ the kernel of this action.  If $\gamma\in K$, then $\gamma$ fixes a point, and so $K$ is a $2$-group by \cite[Lemma 6.3]{HujdurovicKMM2016}.  Let $C'\in{\mathcal C}$ such that $K\vert_{C'}\not = 1$.  Then $(K/{\mathcal B})\vert_{C'/{\mathcal B}}\not = 1$, and as $K/{\mathcal B}\tl\fix_{{\mathcal A}^o}({\mathcal C})/{\mathcal B}$, we see that $(K/{\mathcal B})\vert_{C'/{\mathcal B}} = \PSL(2,7)$, which is not a $2$-group, a contradiction.  Thus $\fix_{{\mathcal A}^o}({\mathcal C})$ is faithful in its action on $C\in{\mathcal C}$.

Now, as conjugation by an element of $\fix_{{\mathcal A}^o}({\mathcal C})$ induces an automorphism of $N$, there is a homomorphism $\phi:\fix_{{\mathcal A}^o}({\mathcal C})\mapsto\Aut(\Z_p)$, and of course $\Aut(\Z_p)$ is cyclic of order $p - 1$.  Then $N\le\Ker(\phi)$ and as $\fix_{{\mathcal A}^o}({\mathcal C})/N\cong \fix_{{\mathcal A}^o/{\mathcal B}}({\mathcal C}/{\mathcal B}) = \PSL(2,7)$ is simple (and so in particular is not cyclic), we see that $\Ker(\phi) = \fix_{{\mathcal A}^o}({\mathcal C})$.  Then $N$ is central in $\fix_{{\mathcal A}^o}({\mathcal C})$.  This implies that $\fix_{{\mathcal A}^o}({\mathcal C})$ is a central extension of the perfect group $\PSL(2,7)$.  If the commutator subgroup $L$ of $\fix_{{\mathcal A}^o}({\mathcal C})$ is not equal to $\fix_{{\mathcal A}^o}({\mathcal C})$, then as $\fix_{{\mathcal A}^o}({\mathcal C})/{\mathcal B}\cong\PSL(2,7)$, we have that $L\cong\PSL(2,7)$.  Then $L$ is a normal subgroup of ${\mathcal A}^o$, and so ${{\mathcal A}^o}$ admits a normal invariant partition ${\mathcal D}\prec{\mathcal C}$ with blocks of size $21$.  Then ${\mathcal A}^o$ contains a minimal normal subgroup that is not a cyclic group of prime degree, and this case reduces to one considered above.  Otherwise, $L = \fix_{{\mathcal A}^o}({\mathcal C})$ is a perfect central extension of $\PSL(2,7)$, and as $\PSL(2,7)$ has Schur multiplier $2$, by \cite[Theorem 33.8]{Aschbacher1986} we have $p = 2$, a contradiction.
\end{proof}

\section*{Acknowledgements}

The work of A. H. is supported in part by the Slovenian Research Agency (research program P1-0285 and research projects N1-0032, and N1-0038). The work of K. K. is supported in part by the Slovenian Research Agency (research program P1-0285 and research projects N1-0032, N1-0038, J1-6720, and J1-6743),
in part by WoodWisdom-Net+, W$^3$B, and in part by NSFC project 11561021.
The work of J.M. is supported in part by a Discovery Grant of the Natural Science and Engineering Research Council of Canada.

\bibliography{References}{}
\bibliographystyle{amsplain}

\end{document}